\providecommand{\U}[1]{\protect\rule{.1in}{.1in}}
\newtheorem{theorem}{Theorem}
\newtheorem{definition}[theorem]{Definition}
\newtheorem{lemma}[theorem]{Lemma}
\newtheorem{remark}[theorem]{Remark}
\newenvironment{proof}[1][Proof]{\noindent\textbf{#1.} }{\ \rule{0.5em}{0.5em}}
\begin{document}

\title{Existence of solutions of $\alpha\in\left(  2,3\right]  $ order fractional
three point boundary value problems with integral conditions}
\author{N. I. Mahmudov, S. Unul\\Eastern Mediterranean University\\Gazimagusa, TRNC, Mersin 10, Turkey \\Email: nazim.mahmudov@emu.edu.tr\\$\ \ \ \ \ \ \ \ \ $\ \ sinem.unul@emu.edu.tr}
\date{}
\maketitle

\begin{abstract}
Existence and uniqueness of solutions for $\alpha\in\left(  2,3\right]  $
order fractional differential equations with three point fractional boundary
and integral conditions is discussed. The results are obtained by using
standard fixed point theorems. Two examples are given to illustrate the results.

\end{abstract}

\section{Introduction}

Recently, the theory on existence and uniqueness of solutions of linear and
nonlinear fractional differential equations has attracted the attention of
many authors, see for example, \cite{agar1}-\cite{wang2} and references
therein. Many of the physical systems can better be described by integral
boundary conditions. Integral boundary conditions are encountered in various
applications such as population dynamics, blood flow models, chemical
engineering and cellular systems. Moreover, boundary value problems with
integral boundary conditions constitute a very interesting and important class
of problems. They include two-point, three-point, multi-point and nonlocal
boundary value problems as special cases. The existing literature mainly deals
with first order and second order boundary value problems and there are a few
papers on third order problems.

Shahed \cite{el} studied existence and nonexistence of positive solution of
nonlinear fractional two-point boundary value problem derivative%
\begin{align*}
\mathfrak{D}_{0^{+}}^{\alpha}u(t)+\lambda a\left(  t\right)  f\left(
u(t)\right)   &  =0,\text{ \ }0<t<1;\text{ }2<\alpha<3,\\
u\left(  0\right)   &  =u^{\prime}\left(  0\right)  =u^{\prime}\left(
1\right)  =0,
\end{align*}
where $\mathfrak{D}_{0^{+}}^{\alpha}$ denotes the Caputo derivative of
fractional order $\alpha,$ $\lambda$ is a positive parameter and $a:\left(
0,1\right)  \rightarrow\left[  0,\infty\right)  $ is continuous function.

In \cite{ahmadntoy1}, Ahmad and Ntouyas studied a boundary value problem of
nonlinear fractional differential equations of order $\alpha\in\left(
2,3\right]  $ with anti-periodic type integral boundary conditions:%
\begin{align*}
\mathfrak{D}_{0^{+}}^{\alpha}u(t) &  =f\left(  t,u(t)\right)  ;\text{
\ }0<t<T;\text{ }2<\alpha\leq3,\\
u^{\left(  j\right)  }(0)-\lambda_{j}u^{\left(  j\right)  }(T) &  =\mu_{j}%
{\displaystyle\int\limits_{0}^{T}}
g_{j}(s,u\left(  s\right)  )ds,\ \ j=0,1,2,
\end{align*}
where $\mathfrak{D}_{0^{+}}^{\alpha}$ denotes the Caputo derivative of
fractional order $\alpha$, $u^{\left(  j\right)  }$ denotes $j$-th derivative
of $u$, $f,g_{0},g_{1},g_{2}:\left[  0,T\right]  \times\mathbb{R}%
\rightarrow\mathbb{R}$ are given continuous functions and $\lambda_{j},\mu
_{j}\in\mathbb{R}$ ($\lambda_{j}\neq1$). The same problem for fractional
differential inclusions is considered in \cite{ahmadntoy2}.

Ahmad and Nieto \cite{ahmad6} studied existence and uniqueness results for the
following general three point fractional boundary value problem involving a
nonlinear fractional differential equation of order $\alpha\in\left(
m-1,m\right]  $,%
\begin{align*}
\mathfrak{D}_{0^{+}}^{\alpha}u(t) &  =f\left(  t,u(t)\right)  ;\text{
\ }0<t<T,\ m\geq2,\\
u\left(  0\right)   &  =u^{\prime}\left(  0\right)  =...=u^{\left(
m-2\right)  }\left(  0\right)  =0,\ \ u\left(  1\right)  =\lambda u\left(
\eta\right)  .
\end{align*}

However, very little work have been done on the case when the nonlinearity $f$
depends on the fractional derivative of the unknown function. Su and Zhang
\cite{su}, Rehman et al. \cite{rehman} studied the existence and uniqueness of
solutions for following nonlinear two-point and three point fractional
boundary value problem when the nonlinearity $f$ depends on the fractional
derivative of the unknown function.

In this paper, we investigate the existence (and uniqueness) of solution for
nonlinear fractional differential equations of order $\alpha\in\left(
2,3\right]  $%

\begin{equation}
\mathfrak{D}_{0^{+}}^{\alpha}u(t)=f\left(  t,u(t),\mathfrak{D}_{0^{+}}%
^{\beta_{1}}u(t),\mathfrak{D}_{0^{+}}^{\beta_{2}}u(t)\right)  ;\text{ \ }0\leq
t\leq T;\text{ }2<\alpha\leq3 \label{p1}%
\end{equation}
with the three point and integral boundary conditions%
\begin{equation}
\left\{
\begin{array}
[c]{l}%
a_{0}u(0)+b_{0}u(T)=\lambda_{0}%
{\displaystyle\int\limits_{0}^{T}}
g_{0}(s,u\left(  s\right)  )ds,\ \ \\
a_{1}\mathfrak{D}_{0^{+}}^{\beta_{1}}u(\eta)+b_{1}\mathfrak{D}_{0^{+}}%
^{\beta_{1}}u(T)=\lambda_{1}%
{\displaystyle\int\limits_{0}^{T}}
g_{1}(s,u\left(  s\right)  )ds,\ \ \ 0<\beta_{1}\leq1,\ \ 0<\eta<T,\\
a_{2}\mathfrak{D}_{0^{+}}^{\beta_{2}}u(\eta)+b_{2}\mathfrak{D}_{0^{+}}%
^{\beta_{2}}u(T)=\lambda_{2}%
{\displaystyle\int\limits_{0}^{T}}
g_{2}(s,u\left(  s\right)  )ds,\ \ \ \ 1<\beta_{2}\leq2,
\end{array}
\right.  \label{p2}%
\end{equation}
where $\mathfrak{D}_{0^{+}}^{\alpha}$ denotes the Caputo fractional derivative
of order $\alpha$, $f,g_{0},g_{1},g_{2}$ are continuous functions.

\section{Preliminaries}

Let us recall some basic definitions \cite{sam}-\cite{kilbas}.

\begin{definition}
\label{Def:1}The Riemann Liouville fractional integral of order $\beta$ for
continous function $f:[0,\infty)\rightarrow\mathbb{R}$ is defined as
\[
I_{0^{+}}^{\alpha}f(t)=\frac{1}{\Gamma(\alpha)}\int\limits_{0}^{t}%
(t-s)^{\alpha-1}f(s)ds,\text{ \ }\alpha>0
\]
provided the integral exists.
\end{definition}

\begin{definition}
For $n$-times continously differentiable function $f:[0,\infty)\rightarrow
\mathbb{R}$ \ the Caputo derivative fractional order $\alpha$ is defined as;%
\[
\mathfrak{D}_{0^{+}}^{\alpha}f(t)=\frac{1}{\Gamma(n-\alpha)}\int
\limits_{0}^{t}(t-s)^{n-\alpha-1}f^{(n)}(s)ds;\text{ \ }n-1<\alpha
<n,\ n=\left[  \alpha\right]  +1,
\]
where $\left[  \alpha\right]  $ denotes the integral part of the real number
$\alpha.$
\end{definition}

\begin{lemma}
\label{Lem:1}Let $\alpha>0.$ Then the differential equation $\mathfrak{D}%
_{0^{+}}^{\alpha}f(t)=0$ has solutions%
\[
f(t)=k_{0}+k_{1}t+k_{2}t^{2}+...+k_{n-1}t^{n-1}%
\]
and
\[
I_{0^{+}}^{\alpha}\mathfrak{D}_{0^{+}}^{\alpha}f(t)=f(t)+k_{0}+k_{1}%
t+k_{2}t^{2}+...+k_{n-1}t^{n-1},
\]
here $k_{i}\in\mathbb{R}$ and $i=1,2,3,...,n-1$, $n=\left[  \alpha\right]
+1.$
\end{lemma}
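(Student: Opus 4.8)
The plan is to reduce both assertions to the semigroup property of the Riemann--Liouville integral together with the classical Taylor formula with integral remainder. Throughout write $n=[\alpha]+1$, so that $0<n-\alpha\le 1$, and recall from the definition of the Caputo derivative that $\mathfrak{D}_{0^{+}}^{\alpha}f=I_{0^{+}}^{n-\alpha}f^{(n)}$; the two facts I will invoke are the composition rule $I_{0^{+}}^{a}I_{0^{+}}^{b}g=I_{0^{+}}^{a+b}g$ for $a,b>0$ and continuous $g$, and the identity $I_{0^{+}}^{n}h(t)=\frac{1}{(n-1)!}\int_{0}^{t}(t-s)^{n-1}h(s)\,ds$.

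For the first statement, suppose $\mathfrak{D}_{0^{+}}^{\alpha}f(t)=0$ for all $t$, i.e. $I_{0^{+}}^{n-\alpha}f^{(n)}(t)=0$. Applying $I_{0^{+}}^{\,1-(n-\alpha)}$ (or reading off directly when $n-\alpha=1$) and using the composition rule gives $I_{0^{+}}^{1}f^{(n)}(t)=\int_{0}^{t}f^{(n)}(s)\,ds=0$ for every $t$; since $f^{(n)}$ is continuous this forces $f^{(n)}\equiv 0$, hence $f$ is a polynomial of degree at most $n-1$. Conversely, every such polynomial satisfies $f^{(n)}\equiv 0$, so $\mathfrak{D}_{0^{+}}^{\alpha}f=I_{0^{+}}^{n-\alpha}0=0$; this yields exactly the claimed family of solutions.

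For the second statement, the factorization of the Caputo derivative and the composition rule give
\[
I_{0^{+}}^{\alpha}\mathfrak{D}_{0^{+}}^{\alpha}f=I_{0^{+}}^{\alpha}I_{0^{+}}^{n-\alpha}f^{(n)}=I_{0^{+}}^{n}f^{(n)}.
\]
Now $I_{0^{+}}^{n}f^{(n)}(t)=\frac{1}{(n-1)!}\int_{0}^{t}(t-s)^{n-1}f^{(n)}(s)\,ds$, and integrating by parts $n$ times (equivalently, invoking Taylor's theorem with integral remainder) produces $f(t)-\sum_{i=0}^{n-1}\frac{f^{(i)}(0)}{i!}t^{i}$. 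Setting $k_{i}:=-f^{(i)}(0)/i!$ gives the asserted identity.

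The only nonroutine ingredient --- and the step I expect to be the main obstacle --- is the composition rule $I_{0^{+}}^{a}I_{0^{+}}^{b}=I_{0^{+}}^{a+b}$. I would prove it once by Fubini's theorem, interchanging the order of the double integral and then evaluating $\int_{s}^{t}(t-\tau)^{a-1}(\tau-s)^{b-1}\,d\tau=(t-s)^{a+b-1}B(a,b)=(t-s)^{a+b-1}\Gamma(a)\Gamma(b)/\Gamma(a+b)$ via the Beta function; after that, both parts are immediate. When $\alpha$ is an integer the Caputo derivative is just $f^{(\alpha)}$ and the statement is the classical fact about iterated integration, so that case can be handled separately or read as the limit of the formulas above.
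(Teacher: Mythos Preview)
Your argument is correct: reducing to $I_{0^{+}}^{n}f^{(n)}$ via the semigroup property and then invoking Taylor's formula with integral remainder is precisely the standard route to both assertions, and the step from $I_{0^{+}}^{n-\alpha}f^{(n)}\equiv 0$ to $f^{(n)}\equiv 0$ by applying a further fractional integral is clean.

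There is nothing to compare against, however: the paper states Lemma~\ref{Lem:1} without proof, treating it as a preliminary fact imported from the standard references \cite{sam}--\cite{kilbas}. Your write-up supplies exactly the argument one finds in those sources, so it is consistent with (and more detailed than) what the paper offers.
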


Caputo fractional derivative of order $n-1<\alpha<n$ for $t^{\gamma}$, is
given as%
\begin{equation}
\mathfrak{D}_{0^{+}}^{\alpha}t^{\gamma}=\left\{
\begin{array}
[c]{l}%
\dfrac{\Gamma\left(  \gamma+1\right)  }{\Gamma\left(  \gamma-\alpha+1\right)
}t^{\gamma-\alpha},\ \ \gamma\in\mathbb{N}\ \ \text{and }\gamma\geq
n\ \ \text{or\ \ }\gamma\notin\mathbb{N}\text{\ \ and\ }\gamma>n-1,\\
0,\ \ \ \gamma\in\left\{  0,1,...,n-1\right\}  .
\end{array}
\right.  \label{d1}%
\end{equation}

Assume that $a_{i},b_{i},\lambda_{i}\in\mathbb{R},$ $0<\eta<T,$ $\beta_{0}=0,$
$0<\beta_{1}\leq1$, $1<\beta_{2}\leq2$ and
\[
a_{0}+b_{0}\neq0,\ \ a_{1}\eta^{1-\beta_{1}}+b_{1}T^{1-\beta_{1}}%
\neq0,\ \ a_{i}\eta^{2-\beta_{i}}+b_{i}T^{2-\beta_{i}}\neq0.
\]
For convenience, we set%
\begin{align*}
\mu^{\beta_{1}}  &  :=\frac{\Gamma(3-\beta_{1})}{2\left(  a_{1}\eta
^{2-\beta_{1}}+b_{1}T^{2-\beta_{1}}\right)  },\ \mu^{\beta_{2}}:=\frac
{\Gamma(3-\beta_{2})}{2\left(  a_{2}\eta^{2-\beta_{2}}+b_{2}T^{2-\beta_{2}%
}\right)  },\ \ \ \ \nu^{\beta_{1}}:=\frac{\Gamma(2-\beta_{1})}{a_{1}%
\eta^{1-\beta_{1}}+b_{1}T^{1-\beta_{1}}},\ \\
\omega_{0}  &  :=\dfrac{1}{a_{0}+b_{0}},\ \ \ \omega_{1}\left(  t\right)
:=\nu^{\beta_{1}}\left(  \dfrac{b_{0}}{a_{0}+b_{0}}T-t\right)  ,\ \ \ \\
\omega_{2}\left(  t\right)   &  :=\dfrac{b_{0}T^{2}}{a_{0}+b_{0}}\mu
^{\beta_{2}}-\dfrac{b_{0}T}{a_{0}+b_{0}}\nu^{\beta_{1}}\dfrac{\mu^{\beta_{2}}%
}{\mu^{\beta_{1}}}+\nu^{\beta_{1}}\dfrac{\mu^{\beta_{2}}}{\mu^{\beta_{1}}%
}t-\mu^{\beta_{2}}t^{2}.
\end{align*}

\begin{lemma}
For any $f,g_{0},g_{1},g_{2}\in C\left(  \left[  0,T\right]  ;\mathbb{R}%
\right)  $, the unique solution of the fractional boundary value problem
\begin{gather}
\ \ \ \ \ \ \mathfrak{D}_{0^{+}}^{\alpha}u(t)=f(t);\text{ \ }0\leq t\leq
T,\text{ }2<\alpha\leq3,\label{e1}\\
\left\{
\begin{array}
[c]{l}%
a_{0}u(0)+b_{0}u(T)=\lambda_{0}%
{\displaystyle\int\limits_{0}^{T}}
g_{0}(s)ds,\\
a_{1}\mathfrak{D}_{0^{+}}^{\beta_{1}}u(\eta)+b_{1}\mathfrak{D}_{0^{+}}%
^{\beta_{1}}u(T)=\lambda_{1}%
{\displaystyle\int\limits_{0}^{T}}
g_{1}(s)ds,\ \ \ \ 0<\eta<T,\ \ 0<\beta_{1}\leq1,\\
a_{2}\mathfrak{D}_{0^{+}}^{\beta_{2}}u(\eta)+b_{2}\mathfrak{D}_{0^{+}}%
^{\beta_{2}}u(T)=\lambda_{2}%
{\displaystyle\int\limits_{0}^{T}}
g_{2}(s)ds,\ \ \ \ 1<\beta_{1}\leq2
\end{array}
\right.  \ \label{e2}%
\end{gather}

is given by%
\begin{align*}
u\left(  t\right)   &  =%
{\displaystyle\int\limits_{0}^{t}}
\dfrac{(t-s)^{\alpha-1}}{\Gamma(\alpha)}f(s)ds+%
{\displaystyle\sum\limits_{i=0}^{2}}
\omega_{i}\left(  t\right)  b_{i}%
{\displaystyle\int\limits_{0}^{T}}
\frac{(T-s)^{\alpha-\beta_{i}-1}}{\Gamma(\alpha-\beta_{i})}f(s)ds\\
&  +%
{\displaystyle\sum\limits_{i=1}^{2}}
\omega_{i}\left(  t\right)  a_{i}%
{\displaystyle\int\limits_{0}^{\eta}}
\frac{(\eta-s)^{\alpha-\beta_{i}-1}}{\Gamma(\alpha-\beta_{i})}f(s)ds-%
{\displaystyle\sum\limits_{i=0}^{2}}
\omega_{i}\left(  t\right)  \lambda_{i}%
{\displaystyle\int\limits_{0}^{T}}
g_{i}(s)ds.
\end{align*}

\end{lemma}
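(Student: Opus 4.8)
The plan is to integrate the fractional differential equation directly and then pin down the three constants of integration using the three boundary conditions. By Lemma~\ref{Lem:1}, applying the Riemann--Liouville integral $I_{0^+}^\alpha$ to $\mathfrak{D}_{0^+}^\alpha u(t)=f(t)$ gives
\[
u(t)=\int_0^t\frac{(t-s)^{\alpha-1}}{\Gamma(\alpha)}f(s)\,ds-c_0-c_1t-c_2t^2
\]
for some real constants $c_0,c_1,c_2$ (writing $n=3$ since $2<\alpha\le3$). The whole problem is now to solve for $c_0,c_1,c_2$ and verify that the resulting $u$ matches the claimed closed form, in particular that the coefficient functions organize into the stated $\omega_0,\omega_1(t),\omega_2(t)$.

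First I would compute the fractional derivatives $\mathfrak{D}_{0^+}^{\beta_1}u$ and $\mathfrak{D}_{0^+}^{\beta_2}u$ using linearity, the semigroup identity $\mathfrak{D}_{0^+}^{\beta}I_{0^+}^\alpha f=I_{0^+}^{\alpha-\beta}f$, and the power rule \eqref{d1}: since $0<\beta_1\le1$ we get $\mathfrak{D}_{0^+}^{\beta_1}(c_0+c_1t+c_2t^2)=c_1\frac{t^{1-\beta_1}}{\Gamma(2-\beta_1)}+c_2\frac{2t^{2-\beta_1}}{\Gamma(3-\beta_1)}$ (the $c_0$ term vanishes), and since $1<\beta_2\le2$ we get $\mathfrak{D}_{0^+}^{\beta_2}(c_0+c_1t+c_2t^2)=c_2\frac{2t^{2-\beta_2}}{\Gamma(3-\beta_2)}$ (both $c_0$ and $c_1$ terms vanish). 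This triangular structure is the key simplification: the third boundary condition involves only $c_2$, the second involves only $c_1$ and $c_2$, and the first involves all three, so the linear system can be solved by back-substitution rather than inverting a full $3\times3$ matrix.

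Next I would solve the system in that order. Substituting into $a_2\mathfrak{D}_{0^+}^{\beta_2}u(\eta)+b_2\mathfrak{D}_{0^+}^{\beta_2}u(T)=\lambda_2\int_0^T g_2$ isolates $c_2$ and, using $\mu^{\beta_2}=\Gamma(3-\beta_2)/\bigl(2(a_2\eta^{2-\beta_2}+b_2T^{2-\beta_2})\bigr)$, produces $c_2=\mu^{\beta_2}\bigl(\lambda_2\int_0^T g_2-a_2 I_{0^+}^{\alpha-\beta_2}f(\eta)-b_2 I_{0^+}^{\alpha-\beta_2}f(T)\bigr)$. Feeding this into the second boundary condition and solving for $c_1$ introduces the factors $\nu^{\beta_1}$ and $\nu^{\beta_1}\mu^{\beta_2}/\mu^{\beta_1}$; feeding both into the first condition $a_0u(0)+b_0u(T)=\lambda_0\int_0^T g_0$ (note $u(0)=-c_0$) solves for $c_0$ with the factor $\omega_0=1/(a_0+b_0)$ and a $b_0/(a_0+b_0)$ term. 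Finally I would substitute $-c_0-c_1t-c_2t^2$ back into the expression for $u(t)$ and collect, term by term, the coefficients multiplying each of $\lambda_i\int_0^T g_i$, each $b_i I_{0^+}^{\alpha-\beta_i}f$, and each $a_i I_{0^+}^{\alpha-\beta_i}f(\eta)$; checking that these coefficients assemble exactly into $\omega_0$, $\omega_1(t)$, $\omega_2(t)$ as defined before the lemma is the bookkeeping heart of the argument. The main obstacle is purely this algebraic verification --- tracking how the nested substitutions distribute the $t^0,t^1,t^2$ powers into the three $\omega_i(t)$ --- together with noting that the nonvanishing hypotheses $a_0+b_0\neq0$, $a_1\eta^{1-\beta_1}+b_1T^{1-\beta_1}\neq0$, $a_i\eta^{2-\beta_i}+b_iT^{2-\beta_i}\neq0$ are exactly what makes each back-substitution step well-defined, hence guarantees uniqueness. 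Conversely, a direct differentiation check that this $u$ solves \eqref{e1}--\eqref{e2} confirms existence.
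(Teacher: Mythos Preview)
Your approach is essentially identical to the paper's: write the general solution $u(t)=I_{0^+}^\alpha f(t)-k_0-k_1t-k_2t^2$, differentiate via the power rule to obtain a triangular linear system, solve by back-substitution ($k_2$ first, then $k_1$, then $k_0$), and substitute back. One small slip: your displayed formula for $c_2$ has the wrong sign --- the correct value is $c_2=\mu^{\beta_2}\bigl(a_2 I_{0^+}^{\alpha-\beta_2}f(\eta)+b_2 I_{0^+}^{\alpha-\beta_2}f(T)-\lambda_2\int_0^T g_2\bigr)$, matching the paper --- but this is a bookkeeping error, not a gap in the argument.
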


\begin{proof}
By Lemma \ref{Lem:1}, for $2<\alpha\leq3$ the general solution of the equation
$\mathfrak{D}_{0^{+}}^{\alpha}u(t)=f(t)$ can be written as
\begin{equation}
u(t)=\frac{1}{\Gamma(\alpha)}%
{\displaystyle\int\limits_{0}^{t}}
(t-s)^{\alpha-1}f(s)ds-k_{0}-k_{1}t-k_{2}t^{2}, \label{ss1}%
\end{equation}
where $k_{0},k_{1},k_{2}\in\mathbb{R}$ are arbitrary constants. Moreover, by
the formula (\ref{d1}) $\beta_{1}$ and $\beta_{2}$ order derivatives are as
follows:
\begin{align*}
\mathfrak{D}_{0^{+}}^{\beta_{1}}u(t)  &  =I^{\alpha-\beta_{1}}f(t)-k_{1}%
\frac{t^{1-\beta_{1}}}{\Gamma(2-\beta_{1})}-2k_{2}\frac{t^{2-\beta_{1}}%
}{\Gamma(3-\beta_{1})},\\
\mathfrak{D}_{0^{+}}^{\beta_{2}}u(t)  &  =I^{\alpha-\beta_{2}}f(t)-2k_{2}%
\frac{t^{2-\beta_{2}}}{\Gamma(3-\beta_{2})}.
\end{align*}
Using boundary conditions (\ref{e2}), we get the following algebraic system of
equations for $k_{0},k_{1},k_{2}$.%
\begin{align*}
-\left(  a_{0}+b_{0}\right)  k_{0}-b_{0}Tk_{1}-b_{0}T^{2}k_{2}  &
=\lambda_{0}\int\limits_{0}^{T}g_{0}(s)ds-b_{0}I_{0^{+}}^{\alpha}f(T),\\
-\frac{a_{1}\eta^{1-\beta_{1}}+b_{1}T^{1-\beta_{1}}}{\Gamma(2-\beta_{1})}%
k_{1}-2\frac{a_{1}\eta^{2-\beta_{1}}+b_{1}T^{2-\beta_{1}}}{\Gamma(3-\beta
_{1})}k_{2}  &  =\lambda_{1}\int\limits_{0}^{T}g_{1}(s)ds-a_{1}I_{0^{+}%
}^{\alpha-\beta_{1}}f(\eta)-b_{1}I_{0^{+}}^{\alpha-\beta_{1}}f(T),\\
-2\frac{a_{2}\eta^{2-\beta_{2}}+b_{2}T^{2-\beta_{2}}}{\Gamma(3-\beta_{2}%
)}k_{2}  &  =\lambda_{2}\int\limits_{0}^{T}g_{2}(s)ds-a_{2}I_{0^{+}}%
^{\alpha-\beta_{2}}f(\eta)-b_{2}I_{0^{+}}^{\alpha-\beta_{2}}f(T).
\end{align*}
Solving the above system of equations for $k_{0},k_{1},k_{2}$, we get the
following:%
\begin{align*}
k_{2}  &  =b_{2}\mu^{\beta_{2}}I_{0^{+}}^{\alpha-\beta_{2}}f(T)+a_{2}%
\mu^{\beta_{2}}I_{0^{+}}^{\alpha-\beta_{2}}f(\eta)-\lambda_{2}\mu^{\beta_{2}%
}\int\limits_{0}^{T}g_{2}(s)ds,\\
k_{1}  &  =b_{1}\nu^{\beta_{1}}I_{0^{+}}^{\alpha-\beta_{1}}f(T)+a_{1}%
\nu^{\beta_{1}}I_{0^{+}}^{\alpha-\beta_{1}}f(\eta)-\lambda_{1}\nu^{\beta_{1}%
}\int\limits_{0}^{T}g_{1}(s)ds\\
&  -b_{2}\nu^{\beta_{1}}\dfrac{\mu^{\beta_{2}}}{\mu^{\beta_{1}}}I_{0^{+}%
}^{\alpha-\beta_{2}}f(T)-a_{2}\nu^{\beta_{1}}\dfrac{\mu^{\beta_{2}}}%
{\mu^{\beta_{1}}}I_{0^{+}}^{\alpha-\beta_{2}}f(\eta)+\lambda_{2}\nu^{\beta
_{1}}\dfrac{\mu^{\beta_{2}}}{\mu^{\beta_{1}}}\int\limits_{0}^{T}g_{2}(s)ds,\\
k_{0}  &  =\dfrac{b_{0}}{a_{0}+b_{0}}I_{0^{+}}^{\alpha}f(T)-\dfrac{\lambda
_{0}}{a_{0}+b_{0}}\int\limits_{0}^{T}g_{0}(s)ds\\
&  -\dfrac{b_{0}b_{1}\nu^{\beta_{1}}T}{a_{0}+b_{0}}I_{0^{+}}^{\alpha-\beta
_{1}}f(T)-\dfrac{b_{0}a_{1}\nu^{\beta_{1}}T}{a_{0}+b_{0}}I_{0^{+}}%
^{\alpha-\beta_{1}}f(\eta)+\dfrac{b_{0}\lambda_{1}\nu^{\beta_{1}}T}%
{a_{0}+b_{0}}\int\limits_{0}^{T}g_{1}(s)ds\\
&  +\dfrac{b_{0}b_{2}\nu^{\beta_{1}}T}{a_{0}+b_{0}}\dfrac{\mu^{\beta_{2}}}%
{\mu^{\beta_{1}}}I_{0^{+}}^{\alpha-\beta_{2}}f(T)+\dfrac{b_{0}a_{2}\nu
^{\beta_{1}}T}{a_{0}+b_{0}}\dfrac{\mu^{\beta_{2}}}{\mu^{\beta_{1}}}I_{0^{+}%
}^{\alpha-\beta_{2}}f(\eta)-\dfrac{b_{0}\lambda_{2}\nu^{\beta_{1}}T}%
{a_{0}+b_{0}}\dfrac{\mu^{\beta_{2}}}{\mu^{\beta_{1}}}\int\limits_{0}^{T}%
g_{2}(s)ds\\
&  -\dfrac{b_{0}b_{2}\mu^{\beta_{2}}T^{2}}{a_{0}+b_{0}}I_{0^{+}}^{\alpha
-\beta_{2}}f(T)-\dfrac{b_{0}a_{2}\mu^{\beta_{2}}T^{2}}{a_{0}+b_{0}}I_{0^{+}%
}^{\alpha-\beta_{2}}f(\eta)+\dfrac{b_{0}\lambda_{2}\mu^{\beta_{2}}T^{2}}%
{a_{0}+b_{0}}\int\limits_{0}^{T}g_{2}(s)ds.
\end{align*}
Inserting $k_{0},k_{1},k_{2}$ into (\ref{ss1}) we get the desired
representation for the solution of (\ref{e1})-(\ref{e2}).
\end{proof}

\begin{remark}
\label{Rem:1}The Green function of the BVP is defined by
\[
G(t;s)=\left\{
\begin{array}
[c]{c}%
-\dfrac{(t-s)^{\alpha-1}}{\Gamma(\alpha)}+G_{0}(t;s),\text{ \ \ }0\leq s\leq
t\leq T,\\
G_{0}(t;s),\text{ \ \ \ \ }0\leq t\leq s\leq T,
\end{array}
\right.
\]
where
\begin{align*}
G_{0}(t;s)  &  =%
{\displaystyle\sum\limits_{i=0}^{2}}
\omega_{i}\left(  t\right)  b_{i}\dfrac{(T-s)^{\alpha-\beta_{i}-1}}%
{\Gamma(\alpha-\beta_{i})}+%
{\displaystyle\sum\limits_{i=1}^{2}}
\omega_{i}\left(  t\right)  a_{i}\dfrac{(\eta-s)^{\alpha-\beta_{i}-1}}%
{\Gamma(\alpha-\beta_{i})}\chi_{\left(  0,\eta\right)  }\left(  s\right)  ,\\
\chi_{\left(  a,b\right)  }\left(  s\right)   &  :=\left\{
\begin{array}
[c]{c}%
1,\ \ \ s\in\left(  a,b\right)  ,\\
0,\ \ \ s\notin\left(  a,b\right)  .
\end{array}
\right.  .
\end{align*}

\end{remark}

\begin{remark}
For $\alpha=3,\beta_{1}=1,\beta_{2}=2$ and $\eta=0,$ the Green function of
(\ref{e1})-(\ref{e2}) can be writen as follows:%
\[
G(t;s)=\left\{
\begin{array}
[c]{c}%
-\dfrac{(t-s)^{\alpha-1}}{\Gamma(\alpha)}+G_{0}(t;s),\text{ \ \ }0\leq s\leq
t\leq T,\\
G_{0}(t;s),\text{ \ \ \ \ }0\leq t\leq s\leq T.
\end{array}
\right.
\]
where%
\begin{align*}
&  G_{0}(t;s)=\dfrac{b_{0}}{a_{0}+b_{0}}\dfrac{(T-s)^{\alpha-1}}{\Gamma
(\alpha)}+\left(  -\frac{b_{0}T}{a_{0}+b_{0}}\frac{b_{1}}{a_{1}+b_{1}}%
+\frac{b_{1}}{a_{1}+b_{1}}t\right)  \dfrac{(T-s)^{\alpha-2}}{\Gamma(\alpha
-1)}\\
&  +\left(  \frac{b_{0}}{a_{0}+b_{0}}\frac{b_{1}}{a_{1}+b_{1}}\frac{b_{2}%
}{a_{2}+b_{2}}T-\frac{b_{0}T^{2}}{a_{0}+b_{0}}\frac{b_{2}}{2\left(
a_{2}+b_{2}\right)  }-\frac{2b_{1}}{a_{1}+b_{1}}\frac{b_{2}}{2\left(
a_{2}+b_{2}\right)  }t+\frac{b_{2}}{2\left(  a_{2}+b_{2}\right)  }%
t^{2}\right)  \dfrac{(T-s)^{\alpha-3}}{\Gamma(\alpha-2)}%
\end{align*}
Moreover, the case%
\[
a_{0}=1,b_{0}=0,a_{1}=0,b_{1}=1,a_{2}=1,b_{2}=0
\]
is investigated in \cite{ait}. In this case,%
\[
G(t;s)=\left\{
\begin{array}
[c]{c}%
-\dfrac{(t-s)^{\alpha-1}}{\Gamma(\alpha)}+t\dfrac{(T-s)^{\alpha-2}}%
{\Gamma(\alpha-1)},\text{ \ \ }0\leq s\leq t\leq T,\\
t\dfrac{(T-s)^{\alpha-2}}{\Gamma(\alpha-1)},\text{ \ \ \ \ }0\leq t\leq s\leq
T.
\end{array}
\right.
\]

\end{remark}

\section{Existence and uniqueness results}

In this section we state and prove an existence and uniqueness result for the
fractional BVP (\ref{p1})-(\ref{p2}) by using the Banach fixed-point theorem.
We study our problem in the space%
\[
C_{\beta}\left(  \left[  0,T\right]  ;\mathbb{R}\right)  :=\left\{  v\in
C\left(  \left[  0,T\right]  ;\mathbb{R}\right)  :\mathfrak{D}_{0^{+}}%
^{\beta_{1}}v,\ \mathfrak{D}_{0^{+}}^{\beta_{2}}v\in C\left(  \left[
0,T\right]  ;\mathbb{R}\right)  \right\}
\]
equipped with the norm%
\[
\left\Vert v\right\Vert _{\beta}:=\left\Vert v\right\Vert _{C}+\left\Vert
\mathfrak{D}_{0^{+}}^{\beta_{1}}v\right\Vert _{C}+\left\Vert \mathfrak{D}%
_{0^{+}}^{\beta_{2}}v\right\Vert _{C},
\]
where $\left\Vert \cdot\right\Vert _{C}$ is the sup norm in $C\left(  \left[
0,T\right]  ;\mathbb{R}\right)  $.

The following notations, formulae and estimations will be used throughout the
paper.%
\begin{align*}
\mathfrak{D}_{0^{+}}^{\beta_{1}}\omega_{1}\left(  t\right)   &  =-\dfrac
{\nu^{\beta_{1}}t^{1-\beta_{1}}}{\Gamma(2-\beta_{1})},...\mathfrak{D}_{0^{+}%
}^{\beta_{1}}\omega_{1}\left(  t\right)  =0,\\
\mathfrak{D}_{0^{+}}^{\beta_{1}}\omega_{2}\left(  t\right)   &  =\dfrac
{\nu^{\beta_{1}}\mu^{\beta_{2}}t^{1-\beta_{1}}}{\mu^{\beta_{1}}\Gamma
(2-\beta_{1})}-2\dfrac{\mu^{\beta_{2}}t^{2-\beta_{1}}}{\Gamma(3-\beta_{1}%
)},\ \ \mathfrak{D}_{0^{+}}^{\beta_{2}}\omega_{2}\left(  t\right)
=-2\dfrac{\mu^{\beta_{2}}t^{2-\beta_{2}}}{\Gamma(3-\beta_{2})}.
\end{align*}%
\begin{align*}
\left\vert \omega_{0}\right\vert  &  =\dfrac{1}{\left\vert a_{0}%
+b_{0}\right\vert }=:\rho_{0},\ \ \ \left\vert \omega_{1}\left(  t\right)
\right\vert \leq\left\vert \nu^{\beta_{1}}\right\vert \left(  \left\vert
\omega_{0}\right\vert \left\vert b_{0}\right\vert +1\right)  T:=\rho
_{1},\ \ \ \\
\left\vert \omega_{2}\left(  t\right)  \right\vert  &  \leq\dfrac{\left\vert
b_{0}\right\vert \left\vert \mu^{\beta_{2}}\right\vert }{\left\vert
a_{0}+b_{0}\right\vert }T^{2}+\dfrac{\left\vert b_{0}\right\vert \left\vert
\nu^{\beta_{1}}\right\vert }{\left\vert a_{0}+b_{0}\right\vert }%
\dfrac{\left\vert \mu^{\beta_{2}}\right\vert }{\left\vert \mu^{\beta_{1}%
}\right\vert }T+\dfrac{\left\vert \nu^{\beta_{1}}\right\vert \left\vert
\mu^{\beta_{2}}\right\vert }{\left\vert \mu^{\beta_{1}}\right\vert
}T+\left\vert \mu^{\beta_{2}}\right\vert T^{2}:=\rho_{2}.\\
\widetilde{\rho}_{0}  &  =0,\ \ \left\vert \mathfrak{D}_{0^{+}}^{\beta_{1}%
}\omega_{1}\left(  t\right)  \right\vert \leq\dfrac{\left\vert \nu^{\beta_{1}%
}\right\vert T^{1-\beta_{1}}}{\Gamma(2-\beta_{1})}:=\widetilde{\rho}%
_{1},\ \ \ \left\vert \mathfrak{D}_{0^{+}}^{\beta_{1}}\omega_{2}\left(
t\right)  \right\vert \leq\dfrac{\left\vert \mu^{\beta_{2}}\right\vert
\left\vert \nu^{\beta_{1}}\right\vert T^{1-\beta_{1}}}{\left\vert \mu
^{\beta_{1}}\right\vert \Gamma(2-\beta_{1})}+2\dfrac{\left\vert \mu^{\beta
_{2}}\right\vert T^{2-\beta_{1}}}{\Gamma(3-\beta_{1})}:=\widetilde{\rho}%
_{2},\\
\widehat{\rho}_{0}  &  =\widehat{\rho}_{1}=0,\ \ \ \left\vert \mathfrak{D}%
_{0^{+}}^{\beta_{2}}\omega_{2}\left(  t\right)  \right\vert \leq
2\dfrac{\left\vert \mu^{\beta_{2}}\right\vert T^{2-\beta_{2}}}{\Gamma
(3-\beta_{1})}:=\widehat{\rho}_{2}.
\end{align*}%
\begin{gather*}
\Delta_{0}:=\dfrac{T^{\alpha-\tau}}{\Gamma(\alpha)}\left(  \frac{1-\tau
}{\alpha-\tau}\right)  ^{1-\tau}+%
{\displaystyle\sum\limits_{i=0}^{2}}
\rho_{i}\left(  \left\vert b_{i}\right\vert \dfrac{T^{\alpha-\beta_{i}-\tau}%
}{\Gamma(\alpha-\beta_{i})}+\left\vert a_{i}\right\vert \dfrac{\eta
^{\alpha-\beta_{i}-\tau}}{\Gamma(\alpha-\beta_{i})}\right)  \left(
\frac{1-\tau}{\alpha-\beta_{i}-\tau}\right)  ^{1-\tau},\\
\Delta_{1}:=\dfrac{l_{f}T^{\alpha-\beta_{1}}}{\Gamma(\alpha-\beta_{1}+1)}+%
{\displaystyle\sum\limits_{i=1}^{2}}
\widetilde{\rho}_{i}\left(  \left\vert b_{i}\right\vert \dfrac{l_{f}%
T^{\alpha-\beta_{i}}}{\Gamma(\alpha-\beta_{i}+1)}+\left\vert a_{i}\right\vert
\dfrac{l_{f}\eta^{\alpha-\beta_{i}}}{\Gamma(\alpha-\beta_{i}+1)}\right)  ,\\
\Delta_{2}:=\dfrac{l_{f}T^{\alpha-\beta_{2}}}{\Gamma(\alpha-\beta_{2}%
+1)}+\widehat{\rho}_{2}\left(  \left\vert b_{2}\right\vert \dfrac
{l_{f}T^{\alpha-\beta_{2}}}{\Gamma(\alpha-\beta_{2}+1)}+\left\vert
a_{2}\right\vert \dfrac{l_{f}\eta^{\alpha-\beta_{2}}}{\Gamma(\alpha-\beta
_{2}+1)}\right)  .
\end{gather*}

\begin{theorem}
\label{Thm:uniq}Assume that

\begin{enumerate}
\item[(H$_{1}$)] The function $f:\left[  0,T\right]  \times\mathbb{R}%
\times\mathbb{R}\times\mathbb{R}\rightarrow\mathbb{R}$ is jointly continuous.

\item[(H$_{2}$)] There exists a function $l_{f}\in L^{\frac{1}{\tau}}\left(
\left[  0,T\right]  ;\mathbb{R}^{+}\right)  $ with $\tau\in\left(
0,\alpha-\beta_{2}\right)  $ such that%
\[
\left\vert f\left(  t,u_{1},u_{2},u_{3}\right)  -f\left(  t,v_{1},v_{2}%
,v_{3}\right)  \right\vert \leq l_{f}\left(  t\right)  \left(  \left\vert
u_{1}-v_{1}\right\vert +\left\vert u_{2}-v_{2}\right\vert +\left\vert
u_{3}-v_{3}\right\vert \right)  ,
\]
for each $\left(  t,u_{1},u_{2},u_{3}\right)  ,\left(  t,v_{1},v_{2}%
,v_{3}\right)  \in\left[  0,T\right]  \times\mathbb{R}\times\mathbb{R}%
\times\mathbb{R}.$

\item[(H$_{3}$)] The function $g_{i}:\left[  0,T\right]  \times\mathbb{R}%
\rightarrow\mathbb{R}$ is jointly continuous and there exists $l_{g_{i}}\in
L^{1}\left(  \left[  0,T\right]  ,\mathbb{R}^{+}\right)  $ such that%
\[
\left\vert g_{i}\left(  t,u\right)  -g_{i}\left(  t,v\right)  \right\vert \leq
l_{g_{i}}\left(  t\right)  \left\vert u-v\right\vert ,\ i=0,1,2
\]
for each $\left(  t,u\right)  ,\left(  t,v\right)  \in\left[  0,T\right]
\times\mathbb{R}.$\newline If%
\begin{equation}
\left(  \Delta_{0}+\Delta_{1}+\Delta_{2}\right)  \left\Vert l_{f}\right\Vert
_{1/\tau}+%
{\displaystyle\sum\limits_{i=0}^{2}}
\rho_{i}\left\vert \lambda_{i}\right\vert \left\Vert l_{g_{i}}\right\Vert
_{1}+%
{\displaystyle\sum\limits_{i=1}^{2}}
\widetilde{\rho}_{i}\left\vert \lambda_{i}\right\vert \left\Vert l_{g_{i}%
}\right\Vert _{1}+\widehat{\rho}_{2}\left\vert \lambda_{2}\right\vert
\left\Vert l_{g_{2}}\right\Vert _{1}<1, \label{cc1}%
\end{equation}
then the problem (\ref{p1})-(\ref{p2}) has a unique solution on $\left[
0,T\right]  $.
\end{enumerate}
\end{theorem}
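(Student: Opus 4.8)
The plan is to recast (\ref{p1})--(\ref{p2}) as a fixed point equation and apply the Banach contraction principle on the Banach space $\left( C_{\beta }\left( \left[ 0,T\right] ;\mathbb{R}\right) ,\left\Vert \cdot \right\Vert _{\beta }\right) $. Guided by the representation lemma above, define $\mathcal{F}:C_{\beta }\left( \left[ 0,T\right] ;\mathbb{R}\right) \rightarrow C_{\beta }\left( \left[ 0,T\right] ;\mathbb{R}\right) $ by
\begin{align*}
\left( \mathcal{F}u\right) \left( t\right) &:=\int_{0}^{t}\frac{(t-s)^{\alpha -1}}{\Gamma (\alpha )}F_{u}(s)\,ds+\sum_{i=0}^{2}\omega _{i}\left( t\right) b_{i}\int_{0}^{T}\frac{(T-s)^{\alpha -\beta _{i}-1}}{\Gamma (\alpha -\beta _{i})}F_{u}(s)\,ds\\
&\quad +\sum_{i=1}^{2}\omega _{i}\left( t\right) a_{i}\int_{0}^{\eta }\frac{(\eta -s)^{\alpha -\beta _{i}-1}}{\Gamma (\alpha -\beta _{i})}F_{u}(s)\,ds-\sum_{i=0}^{2}\omega _{i}\left( t\right) \lambda _{i}\int_{0}^{T}g_{i}(s,u(s))\,ds,
\end{align*}
where $F_{u}(s):=f\left( s,u(s),\mathfrak{D}_{0^{+}}^{\beta _{1}}u(s),\mathfrak{D}_{0^{+}}^{\beta _{2}}u(s)\right) $. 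By the representation lemma, a function $u\in C_{\beta }$ solves (\ref{p1})--(\ref{p2}) if and only if $\mathcal{F}u=u$, so it suffices to produce a unique fixed point of $\mathcal{F}$.

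First I would verify that $\mathcal{F}$ is well defined, i.e. that for $u\in C_{\beta }$ the functions $\mathcal{F}u$, $\mathfrak{D}_{0^{+}}^{\beta _{1}}\mathcal{F}u$ and $\mathfrak{D}_{0^{+}}^{\beta _{2}}\mathcal{F}u$ are continuous on $\left[ 0,T\right] $. This rests on the joint continuity of $f,g_{0},g_{1},g_{2}$ in (H$_{1}$), (H$_{3}$) (so $F_{u}$ and $s\mapsto g_{i}(s,u(s))$ are continuous), on the continuity of the Riemann--Liouville integrals $I_{0^{+}}^{\alpha }F_{u}$ and $I_{0^{+}}^{\alpha -\beta _{j}}F_{u}$, on the identities $\mathfrak{D}_{0^{+}}^{\beta _{j}}I_{0^{+}}^{\alpha }=I_{0^{+}}^{\alpha -\beta _{j}}$, and on the explicit formulas for $\mathfrak{D}_{0^{+}}^{\beta _{1}}\omega _{i}$, $\mathfrak{D}_{0^{+}}^{\beta _{2}}\omega _{i}$ recorded just before the theorem (in particular $\mathfrak{D}_{0^{+}}^{\beta _{1}}$ annihilates the $\omega _{0}$-terms and $\mathfrak{D}_{0^{+}}^{\beta _{2}}$ annihilates the $\omega _{0}$- and $\omega _{1}$-terms). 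I would also note that $\left( C_{\beta },\left\Vert \cdot \right\Vert _{\beta }\right) $ is complete, so the Banach theorem is applicable.

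The heart of the proof is the contraction estimate. Fix $u,v\in C_{\beta }$. From (H$_{2}$) and the definition of $\left\Vert \cdot \right\Vert _{\beta }$,
\[
\left\vert F_{u}(s)-F_{v}(s)\right\vert \leq l_{f}(s)\left( \left\vert u(s)-v(s)\right\vert +\left\vert \mathfrak{D}_{0^{+}}^{\beta _{1}}(u-v)(s)\right\vert +\left\vert \mathfrak{D}_{0^{+}}^{\beta _{2}}(u-v)(s)\right\vert \right) \leq l_{f}(s)\left\Vert u-v\right\Vert _{\beta },
\]
and from (H$_{3}$), $\left\vert g_{i}(s,u(s))-g_{i}(s,v(s))\right\vert \leq l_{g_{i}}(s)\left\Vert u-v\right\Vert _{\beta }$. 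In every term of $\mathcal{F}u-\mathcal{F}v$, and of its $\beta _{1}$- and $\beta _{2}$-derivatives, in which $l_{f}$ is integrated against one of the kernels $(t-s)^{\alpha -1}$, $(T-s)^{\alpha -\beta _{i}-1}$, $(\eta -s)^{\alpha -\beta _{i}-1}$, I would apply H\"older's inequality with exponents $1/\tau $ and $1/(1-\tau )$: the restriction $\tau \in \left( 0,\alpha -\beta _{2}\right) $ forces each exponent $(\alpha -\beta _{i}-1)/(1-\tau )$ and $(\alpha -1)/(1-\tau )$ to exceed $-1$, so the power integrals converge and reproduce exactly the constants collected in $\Delta _{0},\Delta _{1},\Delta _{2}$, multiplied by $\left\Vert l_{f}\right\Vert _{1/\tau }$. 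Using the bounds $\left\vert \omega _{i}(t)\right\vert \leq \rho _{i}$, $\left\vert \mathfrak{D}_{0^{+}}^{\beta _{1}}\omega _{i}(t)\right\vert \leq \widetilde{\rho }_{i}$, $\left\vert \mathfrak{D}_{0^{+}}^{\beta _{2}}\omega _{i}(t)\right\vert \leq \widehat{\rho }_{i}$ from the list preceding the theorem, and estimating each $g_{i}$-integral by $\left\Vert l_{g_{i}}\right\Vert _{1}\left\Vert u-v\right\Vert _{\beta }$, I would add the contributions to $\left\Vert \mathcal{F}u-\mathcal{F}v\right\Vert _{C}$, $\left\Vert \mathfrak{D}_{0^{+}}^{\beta _{1}}(\mathcal{F}u-\mathcal{F}v)\right\Vert _{C}$ and $\left\Vert \mathfrak{D}_{0^{+}}^{\beta _{2}}(\mathcal{F}u-\mathcal{F}v)\right\Vert _{C}$ to obtain $\left\Vert \mathcal{F}u-\mathcal{F}v\right\Vert _{\beta }\leq \Lambda \left\Vert u-v\right\Vert _{\beta }$, where $\Lambda $ is precisely the left-hand side of (\ref{cc1}). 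Hypothesis (\ref{cc1}) gives $\Lambda <1$, so $\mathcal{F}$ is a contraction and has a unique fixed point, which is the unique solution of (\ref{p1})--(\ref{p2}).

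I expect the main obstacle to be purely computational bookkeeping: differentiating $\mathcal{F}u$ term by term to the fractional orders $\beta _{1},\beta _{2}$, keeping track of which of the kernels $(t-s)^{\alpha -1}$, $(T-s)^{\alpha -\beta _{i}-1}$, $(\eta -s)^{\alpha -\beta _{i}-1}$ survives in each component, and applying H\"older's inequality uniformly in $t$ so that the bounds genuinely hold in the sup norm; making the emerging constants coincide exactly with $\Delta _{0}+\Delta _{1}+\Delta _{2}$ together with the $\rho $-, $\widetilde{\rho }$- and $\widehat{\rho }$-sums in (\ref{cc1}) is the only delicate point.
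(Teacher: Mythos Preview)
Your proposal is correct and follows essentially the same route as the paper: define the solution operator $\mathcal{F}$ via the representation lemma, compute its $\beta_{1}$- and $\beta_{2}$-fractional derivatives, estimate each of the three pieces of $\left\Vert \mathcal{F}u-\mathcal{F}v\right\Vert_{\beta}$ using (H$_{2}$), (H$_{3}$), the bounds $\rho_{i},\widetilde{\rho}_{i},\widehat{\rho}_{i}$ and H\"older's inequality with exponents $1/\tau$ and $1/(1-\tau)$, and conclude by the Banach contraction principle. The paper's proof differs only in that it writes out the three estimates (\ref{f1})--(\ref{f12}) explicitly rather than describing them, and your anticipated obstacle---matching the emerging constants to $\Delta_{0},\Delta_{1},\Delta_{2}$ and the $\rho$-sums---is indeed the only work left.
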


\begin{proof}
In order to transform the BVP (\ref{p1})-(\ref{p2}) into a fixed point
problem, we consider the operator $\mathfrak{F}:C_{\beta}\left(  \left[
0,T\right]  ;\mathbb{R}\right)  \rightarrow C_{\beta}\left(  \left[
0,T\right]  ;\mathbb{R}\right)  $ which is defined by%
\begin{align}
\left(  \mathfrak{F}u\right)  \left(  t\right)   &  =%
{\displaystyle\int\limits_{0}^{t}}
\dfrac{(t-s)^{\alpha-1}}{\Gamma(\alpha)}f(s,u\left(  s\right)  ,\mathfrak{D}%
_{0^{+}}^{\beta_{1}}u(s),\mathfrak{D}_{0^{+}}^{\beta_{2}}u(s))ds\nonumber\\
&  +%
{\displaystyle\sum\limits_{i=0}^{2}}
\omega_{i}\left(  t\right)  b_{i}%
{\displaystyle\int\limits_{0}^{T}}
\frac{(T-s)^{\alpha-\beta_{i}-1}}{\Gamma(\alpha-\beta_{i})}f(s,u\left(
s\right)  ,\mathfrak{D}_{0^{+}}^{\beta_{1}}u(s),\mathfrak{D}_{0^{+}}%
^{\beta_{2}}u(s))ds\nonumber\\
&  +%
{\displaystyle\sum\limits_{i=1}^{2}}
\omega_{i}\left(  t\right)  a_{i}%
{\displaystyle\int\limits_{0}^{\eta}}
\frac{(\eta-s)^{\alpha-\beta_{i}-1}}{\Gamma(\alpha-\beta_{i})}f(s,u\left(
s\right)  ,\mathfrak{D}_{0^{+}}^{\beta_{1}}u(s),\mathfrak{D}_{0^{+}}%
^{\beta_{2}}u(s))ds-%
{\displaystyle\sum\limits_{i=0}^{2}}
\omega_{i}\left(  t\right)  \lambda_{i}%
{\displaystyle\int\limits_{0}^{T}}
g_{i}(s,u\left(  s\right)  )ds, \label{ff1}%
\end{align}
and take its $\beta_{1}$-th and $\beta_{2}$-th fractional derivative to get%
\begin{align}
\mathfrak{D}_{0^{+}}^{\beta_{1}}\left(  \mathfrak{F}u\right)  \left(
t\right)   &  =%
{\displaystyle\int\limits_{0}^{t}}
\dfrac{(t-s)^{\alpha-\beta_{1}-1}}{\Gamma(\alpha-\beta_{1})}f(s,u\left(
s\right)  ,\mathfrak{D}_{0^{+}}^{\beta_{1}}u(s),\mathfrak{D}_{0^{+}}%
^{\beta_{2}}u(s))ds\nonumber\\
&  +%
{\displaystyle\sum\limits_{i=1}^{2}}
\mathfrak{D}_{0^{+}}^{\beta_{1}}\omega_{i}\left(  t\right)  b_{i}%
{\displaystyle\int\limits_{0}^{T}}
\frac{(T-s)^{\alpha-\beta_{i}-1}}{\Gamma(\alpha-\beta_{i})}f(s,u\left(
s\right)  ,\mathfrak{D}_{0^{+}}^{\beta_{1}}u(s),\mathfrak{D}_{0^{+}}%
^{\beta_{2}}u(s))ds\nonumber\\
&  +%
{\displaystyle\sum\limits_{i=1}^{2}}
\mathfrak{D}_{0^{+}}^{\beta_{1}}\omega_{i}\left(  t\right)  a_{i}%
{\displaystyle\int\limits_{0}^{\eta}}
\frac{(\eta-s)^{\alpha-\beta_{i}-1}}{\Gamma(\alpha-\beta_{i})}f(s,u\left(
s\right)  ,\mathfrak{D}_{0^{+}}^{\beta_{1}}u(s),\mathfrak{D}_{0^{+}}%
^{\beta_{2}}u(s))ds-%
{\displaystyle\sum\limits_{i=1}^{2}}
\mathfrak{D}_{0^{+}}^{\beta_{1}}\omega_{i}\left(  t\right)  \lambda_{i}%
{\displaystyle\int\limits_{0}^{T}}
g_{i}(s,u\left(  s\right)  )ds, \label{ff2}%
\end{align}
and%
\begin{align}
\mathfrak{D}_{0^{+}}^{\beta_{2}}\left(  \mathfrak{F}u\right)  \left(
t\right)   &  =%
{\displaystyle\int\limits_{0}^{t}}
\dfrac{(t-s)^{\alpha-\beta_{2}-1}}{\Gamma(\alpha-\beta_{2})}f(s,u\left(
s\right)  ,\mathfrak{D}_{0^{+}}^{\beta_{1}}u(s),\mathfrak{D}_{0^{+}}%
^{\beta_{2}}u(s))ds\nonumber\\
&  +\mathfrak{D}_{0^{+}}^{\beta_{2}}\omega_{2}\left(  t\right)  b_{2}%
{\displaystyle\int\limits_{0}^{T}}
\frac{(T-s)^{\alpha-\beta_{2}-1}}{\Gamma(\alpha-\beta_{2})}f(s,u\left(
s\right)  ,\mathfrak{D}_{0^{+}}^{\beta_{1}}u(s),\mathfrak{D}_{0^{+}}%
^{\beta_{2}}u(s))ds\nonumber\\
&  +\mathfrak{D}_{0^{+}}^{\beta_{2}}\omega_{2}\left(  t\right)  a_{2}%
{\displaystyle\int\limits_{0}^{\eta}}
\frac{(\eta-s)^{\alpha-\beta_{2}-1}}{\Gamma(\alpha-\beta_{2})}f(s,u\left(
s\right)  ,\mathfrak{D}_{0^{+}}^{\beta_{1}}u(s),\mathfrak{D}_{0^{+}}%
^{\beta_{2}}u(s))ds-\mathfrak{D}_{0^{+}}^{\beta_{2}}\omega_{2}\left(
t\right)  \lambda_{2}%
{\displaystyle\int\limits_{0}^{T}}
g_{2}(s,u\left(  s\right)  )ds. \label{ff3}%
\end{align}
Clearly, due to $f,g_{0},g_{1},g_{2}$ being jointly continuous, the
expressions (\ref{ff1})-(\ref{ff3}) are well defined. It is obvious that the
fixed point of the operator $\mathfrak{F}$ is a solution of the problem
(\ref{p1})-(\ref{p2}). To show existence and uniquiness of the solution
(\ref{e1})-(\ref{e2}) we use the Banach fixed point theorem. To this end, we
show that $\mathfrak{F}$ is contraction.%
\begin{align}
&  \left\vert \left(  \mathfrak{F}u\right)  \left(  t\right)  -\left(
\mathfrak{F}v\right)  \left(  t\right)  \right\vert \nonumber\\
&  \leq%
{\displaystyle\int\limits_{0}^{t}}
\dfrac{(t-s)^{\alpha-1}}{\Gamma(\alpha)}\left\vert f(s,u\left(  s\right)
,\mathfrak{D}_{0^{+}}^{\beta_{1}}u(s),\mathfrak{D}_{0^{+}}^{\beta_{2}%
}u(s))-f(s,v\left(  s\right)  ,\mathfrak{D}_{0^{+}}^{\beta_{1}}%
v(s),\mathfrak{D}_{0^{+}}^{\beta_{2}}v(s))\right\vert ds\nonumber\\
&  +%
{\displaystyle\sum\limits_{i=0}^{2}}
\left\vert \omega_{i}\left(  t\right)  \right\vert \left\vert b_{i}%
\right\vert
{\displaystyle\int\limits_{0}^{T}}
\frac{(T-s)^{\alpha-\beta_{i}-1}}{\Gamma(\alpha-\beta_{i})}\left\vert
f(s,u\left(  s\right)  ,\mathfrak{D}_{0^{+}}^{\beta_{1}}u(s),\mathfrak{D}%
_{0^{+}}^{\beta_{2}}u(s))-f(s,v\left(  s\right)  ,\mathfrak{D}_{0^{+}}%
^{\beta_{1}}v(s),\mathfrak{D}_{0^{+}}^{\beta_{2}}v(s))\right\vert
ds\nonumber\\
&  +%
{\displaystyle\sum\limits_{i=1}^{2}}
\left\vert \omega_{i}\left(  t\right)  \right\vert \left\vert a_{i}%
\right\vert
{\displaystyle\int\limits_{0}^{\eta}}
\frac{(\eta-s)^{\alpha-\beta_{i}-1}}{\Gamma(\alpha-\beta_{i})}\left\vert
f(s,u\left(  s\right)  ,\mathfrak{D}_{0^{+}}^{\beta_{1}}u(s),\mathfrak{D}%
_{0^{+}}^{\beta_{2}}u(s))-f(s,v\left(  s\right)  ,\mathfrak{D}_{0^{+}}%
^{\beta_{1}}v(s),\mathfrak{D}_{0^{+}}^{\beta_{2}}v(s))\right\vert
ds\nonumber\\
&  +%
{\displaystyle\sum\limits_{i=0}^{2}}
\left\vert \omega_{i}\left(  t\right)  \right\vert \left\vert \lambda
_{i}\right\vert
{\displaystyle\int\limits_{0}^{T}}
\left\vert g_{i}(s,u\left(  s\right)  )-g_{i}(s,v\left(  s\right)
)\right\vert ds\nonumber\\
&  \leq\left\Vert l_{f}\right\Vert _{1/\tau}\dfrac{T^{\alpha-\tau}}%
{\Gamma(\alpha)}\left(  \frac{1-\tau}{\alpha-\tau}\right)  ^{1-\tau}\left\Vert
u-v\right\Vert _{\beta}\nonumber\\
&  +\left\Vert l_{f}\right\Vert _{1/\tau}%
{\displaystyle\sum\limits_{i=0}^{2}}
\rho_{i}\left(  \left\vert b_{i}\right\vert \dfrac{T^{\alpha-\beta_{i}-\tau}%
}{\Gamma(\alpha-\beta_{i})}+\left\vert a_{i}\right\vert \dfrac{\eta
^{\alpha-\beta_{i}-\tau}}{\Gamma(\alpha-\beta_{i})}\right)  \left(
\frac{1-\tau}{\alpha-\beta_{i}-\tau}\right)  ^{1-\tau}+%
{\displaystyle\sum\limits_{i=0}^{2}}
\rho_{i}\left\vert \lambda_{i}\right\vert \left\Vert l_{g_{i}}\right\Vert
_{1}\left\Vert u-v\right\Vert _{\beta}\nonumber\\
&  =\left(  \Delta_{0}\left\Vert l_{f}\right\Vert _{1/\tau}+%
{\displaystyle\sum\limits_{i=0}^{2}}
\rho_{i}\left\vert \lambda_{i}\right\vert \left\Vert l_{g_{i}}\right\Vert
_{1}\right)  \left\Vert u-v\right\Vert _{\beta}. \label{f1}%
\end{align}
On the other hand,
\begin{align}
&  \left\vert \mathfrak{D}_{0+}^{\beta_{1}}\left(  \mathfrak{F}u\right)
\left(  t\right)  -\mathfrak{D}_{0+}^{\beta_{1}}\left(  \mathfrak{F}v\right)
\left(  t\right)  \right\vert \nonumber\\
&  \leq%
{\displaystyle\int\limits_{0}^{t}}
\dfrac{(t-s)^{\alpha-\beta_{1}-1}}{\Gamma(\alpha-\beta_{1})}\left\vert
f(s,u\left(  s\right)  ,\mathfrak{D}_{0^{+}}^{\beta_{1}}u(s),\mathfrak{D}%
_{0^{+}}^{\beta_{2}}u(s))-f(s,v\left(  s\right)  ,\mathfrak{D}_{0^{+}}%
^{\beta_{1}}v(s),\mathfrak{D}_{0^{+}}^{\beta_{2}}v(s))\right\vert
ds\nonumber\\
&  +%
{\displaystyle\sum\limits_{i=1}^{2}}
\left\vert \mathfrak{D}_{0+}^{\beta_{1}}\omega_{i}\left(  t\right)
\right\vert \left\vert b_{i}\right\vert
{\displaystyle\int\limits_{0}^{T}}
\frac{(T-s)^{\alpha-\beta_{i}-1}}{\Gamma(\alpha-\beta_{i})}\left\vert
f(s,u\left(  s\right)  ,\mathfrak{D}_{0^{+}}^{\beta_{1}}u(s),\mathfrak{D}%
_{0^{+}}^{\beta_{2}}u(s))-f(s,v\left(  s\right)  ,\mathfrak{D}_{0^{+}}%
^{\beta_{1}}v(s),\mathfrak{D}_{0^{+}}^{\beta_{2}}v(s))\right\vert
ds\nonumber\\
&  +%
{\displaystyle\sum\limits_{i=1}^{2}}
\left\vert \mathfrak{D}_{0+}^{\beta_{1}}\omega_{i}\left(  t\right)
\right\vert \left\vert a_{i}\right\vert
{\displaystyle\int\limits_{0}^{\eta}}
\frac{(\eta-s)^{\alpha-\beta_{i}-1}}{\Gamma(\alpha-\beta_{i})}\left\vert
f(s,u\left(  s\right)  ,\mathfrak{D}_{0^{+}}^{\beta_{1}}u(s),\mathfrak{D}%
_{0^{+}}^{\beta_{2}}u(s))-f(s,v\left(  s\right)  ,\mathfrak{D}_{0^{+}}%
^{\beta_{1}}v(s),\mathfrak{D}_{0^{+}}^{\beta_{2}}v(s))\right\vert
ds\nonumber\\
&  +%
{\displaystyle\sum\limits_{i=1}^{2}}
\left\vert \mathfrak{D}_{0+}^{\beta_{1}}\omega_{i}\left(  t\right)
\right\vert \left\vert \lambda_{i}\right\vert
{\displaystyle\int\limits_{0}^{T}}
\left\vert g_{i}(s,u\left(  s\right)  )-g_{i}(s,v\left(  s\right)
)\right\vert ds\nonumber\\
&  \leq\dfrac{T^{\alpha-\beta_{1}-\tau}}{\Gamma(\alpha-\beta_{1})}\left(
\frac{1-\tau}{\alpha-\beta_{1}-\tau}\right)  ^{1-\tau}\left\Vert
l_{f}\right\Vert _{1/\tau}\left\Vert u-v\right\Vert _{\beta}\nonumber\\
&  +%
{\displaystyle\sum\limits_{i=1}^{2}}
\widetilde{\rho}_{i}\left[  \left\Vert l_{f}\right\Vert _{1/\tau}\left(
\left\vert b_{i}\right\vert \dfrac{T^{\alpha-\beta_{i}-\tau}}{\Gamma
(\alpha-\beta_{i})}+\left\vert a_{i}\right\vert \dfrac{\eta^{\alpha-\beta
_{i}-\tau}}{\Gamma(\alpha-\beta_{i})}\right)  \left(  \frac{1-\tau}%
{\alpha-\beta_{i}-\tau}\right)  ^{1-\tau}+\left\vert \lambda_{i}\right\vert
\left\Vert l_{g_{i}}\right\Vert _{1}\right]  \left\Vert u-v\right\Vert
_{\beta}\nonumber\\
&  =\left(  \Delta_{1}\left\Vert l_{f}\right\Vert _{1/\tau}+%
{\displaystyle\sum\limits_{i=1}^{2}}
\widetilde{\rho}_{i}\left\vert \lambda_{i}\right\vert \left\Vert l_{g_{i}%
}\right\Vert _{1}\right)  \left\Vert u-v\right\Vert _{\beta}. \label{f11}%
\end{align}
Similarly%
\begin{align}
&  \left\vert \mathfrak{D}_{0+}^{\beta_{2}}\left(  \mathfrak{F}u\right)
\left(  t\right)  -\mathfrak{D}_{0+}^{\beta_{2}}\left(  \mathfrak{F}v\right)
\left(  t\right)  \right\vert \nonumber\\
&  \leq%
{\displaystyle\int\limits_{0}^{t}}
\dfrac{(t-s)^{\alpha-\beta_{2}-1}}{\Gamma(\alpha-\beta_{2})}\left\vert
f(s,u\left(  s\right)  ,\mathfrak{D}_{0^{+}}^{\beta_{1}}u(s),\mathfrak{D}%
_{0^{+}}^{\beta_{2}}u(s))-f(s,v\left(  s\right)  ,\mathfrak{D}_{0^{+}}%
^{\beta_{1}}v(s),\mathfrak{D}_{0^{+}}^{\beta_{2}}v(s))\right\vert
ds\nonumber\\
&  +\left\vert \mathfrak{D}_{0+}^{\beta_{2}}\omega_{2}\left(  t\right)
\right\vert \left\vert b_{2}\right\vert
{\displaystyle\int\limits_{0}^{T}}
\frac{(T-s)^{\alpha-\beta_{2}-1}}{\Gamma(\alpha-\beta_{2})}\left\vert
f(s,u\left(  s\right)  ,\mathfrak{D}_{0^{+}}^{\beta_{1}}u(s),\mathfrak{D}%
_{0^{+}}^{\beta_{2}}u(s))-f(s,v\left(  s\right)  ,\mathfrak{D}_{0^{+}}%
^{\beta_{1}}v(s),\mathfrak{D}_{0^{+}}^{\beta_{2}}v(s))\right\vert
ds\nonumber\\
&  +\left\vert \mathfrak{D}_{0+}^{\beta_{2}}\omega_{2}\left(  t\right)
\right\vert \left\vert a_{2}\right\vert
{\displaystyle\int\limits_{0}^{\eta}}
\frac{(\eta-s)^{\alpha-\beta_{2}-1}}{\Gamma(\alpha-\beta_{2})}\left\vert
f(s,u\left(  s\right)  ,\mathfrak{D}_{0^{+}}^{\beta_{1}}u(s),\mathfrak{D}%
_{0^{+}}^{\beta_{2}}u(s))-f(s,v\left(  s\right)  ,\mathfrak{D}_{0^{+}}%
^{\beta_{1}}v(s),\mathfrak{D}_{0^{+}}^{\beta_{2}}v(s))\right\vert
ds\nonumber\\
&  +\left\vert \mathfrak{D}_{0+}^{\beta_{2}}\omega_{2}\left(  t\right)
\right\vert \left\vert \lambda_{2}\right\vert
{\displaystyle\int\limits_{0}^{T}}
\left\vert g_{2}(s,u\left(  s\right)  )-g_{2}(s,v\left(  s\right)
)\right\vert ds\nonumber\\
&  \leq\dfrac{T^{\alpha-\beta_{2}-\tau}}{\Gamma(\alpha-\beta_{2})}\left(
\frac{1-\tau}{\alpha-\beta_{2}-\tau}\right)  ^{1-\tau}\left\Vert
l_{f}\right\Vert _{1/\tau}\left\Vert u-v\right\Vert _{\beta}\nonumber\\
&  +\widehat{\rho}_{2}\left(  \left\vert b_{2}\right\vert \frac{T^{\alpha
-\beta_{2}-\tau}}{\Gamma(\alpha-\beta_{2})}+\left\vert a_{2}\right\vert
\frac{\eta^{\alpha-\beta_{2}-\tau}}{\Gamma(\alpha-\beta_{2})}\right)  \left(
\frac{1-\tau}{\alpha-\beta_{2}-\tau}\right)  ^{1-\tau}\left\Vert
l_{f}\right\Vert _{1/\tau}+\widehat{\rho}_{2}\left\vert \lambda_{2}\right\vert
\left\Vert l_{g_{2}}\right\Vert _{1}\left\Vert u-v\right\Vert _{\beta
}\nonumber\\
&  =\left(  \Delta_{2}\left\Vert l_{f}\right\Vert _{1/\tau}+\widehat{\rho}%
_{2}\left\vert \lambda_{2}\right\vert \left\Vert l_{g_{2}}\right\Vert
_{1}\right)  \left\Vert u-v\right\Vert _{\beta}. \label{f12}%
\end{align}
Here, in estimations (\ref{f1})-(\ref{f12}), we used the H\"{o}lder inequality%
\begin{align*}%
{\displaystyle\int\limits_{0}^{t}}
l_{f}\left(  s\right)  \left(  t-s\right)  ^{\alpha-m-1}ds  &  \leq\left(
{\displaystyle\int\limits_{0}^{t}}
\left(  l_{f}\left(  s\right)  \right)  ^{\frac{1}{\tau}}ds\right)  ^{\tau
}\left(
{\displaystyle\int\limits_{0}^{t}}
\left(  \left(  t-s\right)  ^{\alpha-m-1}\right)  ^{\frac{1}{1-\tau}%
}ds\right)  ^{1-\tau}\\
&  =\left\Vert l_{f}\right\Vert _{L^{1/\tau}}\left(  \frac{1-\tau}%
{\alpha-m-\tau}\right)  ^{1-\tau}t^{\alpha-m-\tau},\ \text{\ \ if }%
0<\gamma<\alpha-m.
\end{align*}
From (\ref{f1})-(\ref{f12}), it follows that%
\begin{align*}
&  \left\Vert \left(  \mathfrak{F}u\right)  -\left(  \mathfrak{F}v\right)
\right\Vert _{\beta}\leq\\
&  \leq\left[  \left(  \Delta_{0}+\Delta_{1}+\Delta_{2}\right)  \left\Vert
l_{f}\right\Vert _{1/\tau}+%
{\displaystyle\sum\limits_{i=0}^{2}}
\rho_{i}\left\vert \lambda_{i}\right\vert \left\Vert l_{g_{i}}\right\Vert
_{1}+%
{\displaystyle\sum\limits_{i=1}^{2}}
\widetilde{\rho}_{i}\left\vert \lambda_{i}\right\vert \left\Vert l_{g_{i}%
}\right\Vert _{1}+\widehat{\rho}_{2}\left\vert \lambda_{2}\right\vert
\left\Vert l_{g_{2}}\right\Vert _{1}\right]  \left\Vert u-v\right\Vert
_{\beta}.
\end{align*}
Consequently by (\ref{cc1}), $\mathfrak{F}$ is a contraction mapping. As a
consequence of the Banach fixed point theorem, we deduce that $\mathfrak{F}$
has a fixed point which is a solution of the problem (\ref{p1})-(\ref{p2}).
\end{proof}

\begin{remark}
In the assumptions (H$_{2}$) if $l_{f}$ is a constant then the condition
(\ref{cc1}) can be replaced by%
\begin{align*}
&  \dfrac{l_{f}T^{\alpha}}{\Gamma(\alpha+1)}+l_{f}%
{\displaystyle\sum\limits_{i=0}^{2}}
\rho_{i}\left(  \left\vert b_{i}\right\vert \dfrac{T^{\alpha-\beta_{i}}%
}{\Gamma(\alpha-\beta_{i}+1)}+\left\vert a_{i}\right\vert \dfrac{\eta
^{\alpha-\beta_{i}}}{\Gamma(\alpha-\beta_{i}+1)}\right) \\
&  +\dfrac{l_{f}T^{\alpha-\beta_{1}}}{\Gamma(\alpha-\beta_{1}+1)}+l_{f}%
{\displaystyle\sum\limits_{i=1}^{2}}
\widetilde{\rho}_{i}\left(  \left\vert b_{i}\right\vert \dfrac{T^{\alpha
-\beta_{i}}}{\Gamma(\alpha-\beta_{i}+1)}+\left\vert a_{i}\right\vert
\dfrac{\eta^{\alpha-\beta_{i}}}{\Gamma(\alpha-\beta_{i}+1)}\right) \\
&  +\dfrac{l_{f}T^{\alpha-\beta_{2}}}{\Gamma(\alpha-\beta_{2}+1)}%
+l_{f}\widehat{\rho}_{2}\left(  \left\vert b_{2}\right\vert \dfrac
{T^{\alpha-\beta_{2}}}{\Gamma(\alpha-\beta_{2}+1)}+\left\vert a_{2}\right\vert
\dfrac{\eta^{\alpha-\beta_{2}}}{\Gamma(\alpha-\beta_{2}+1)}\right) \\
&  +%
{\displaystyle\sum\limits_{i=0}^{2}}
\rho_{i}\left\vert \lambda_{i}\right\vert \left\Vert l_{g_{i}}\right\Vert
_{1}+%
{\displaystyle\sum\limits_{i=1}^{2}}
\widetilde{\rho}_{i}\left\vert \lambda_{i}\right\vert \left\Vert l_{g_{i}%
}\right\Vert _{1}+\widehat{\rho}_{2}\left\vert \lambda_{2}\right\vert
\left\Vert l_{g_{2}}\right\Vert _{1}<1.
\end{align*}

\end{remark}

\section{Existence results}

To prove the existence of solutions for BVP (\ref{p1})-(\ref{p2}), we recall
the following known nonlinear alternative.

\begin{theorem}
\label{Thm:na}(Nonlinear alternative) Let $X$ be a Banach space, let $B$ be a
closed, convex subset of $X$, let $W$ be an open subset of $B$ and $0\in W$.
Suppose that $F:\overline{W}\rightarrow B$ is a continuous and compact map.
Then either (a) $F$ has a fixed point in $\overline{W}$, or (b) there exist an
$x\in\partial W$ (the boundary of $W$) and $\lambda\in\left(  0,1\right)  $
with $x=\lambda F\left(  x\right)  .$
\end{theorem}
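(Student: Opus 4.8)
The statement is the classical Leray--Schauder nonlinear alternative, so the plan is to derive the dichotomy from Schauder's fixed point theorem together with a Urysohn-type separation (a "retraction" argument). Suppose alternative (a) fails, i.e. $F$ has no fixed point in $\overline{W}$; I want to show (b) must then hold. Assume, toward a contradiction, that (b) also fails, so there is no pair $x\in\partial W$, $\lambda\in(0,1)$ with $x=\lambda F(x)$.

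First I would introduce $A:=\{x\in\overline{W}:x=\lambda F(x)\text{ for some }\lambda\in[0,1]\}$. It is nonempty, since $0\in W\subseteq\overline{W}$ and $0=0\cdot F(0)$, and it is closed: if $x_n=\lambda_n F(x_n)\to x$ with $\lambda_n\to\lambda$ along a subsequence (using that $F(\overline{W})$ is relatively compact), then continuity of $F$ gives $x=\lambda F(x)$. The key separation step is $A\cap\partial W=\emptyset$: if $x\in A\cap\partial W$ with $x=\lambda F(x)$, then $\lambda\neq0$ because $x\neq0$ (as $0$ lies in the open set $W$ and $x\in\partial W$), $\lambda\neq1$ because $F$ has no fixed point, so $\lambda\in(0,1)$, contradicting the assumed failure of (b). Hence $A$ and $\partial W$ are disjoint closed subsets of the metric space $\overline{W}$, and either Urysohn's lemma or the explicit formula $\varphi(x)=d(x,\partial W)/\bigl(d(x,\partial W)+d(x,A)\bigr)$ yields a continuous $\varphi:\overline{W}\to[0,1]$ with $\varphi\equiv1$ on $A$ and $\varphi\equiv0$ on $\partial W$.

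Then I would extend by zero: define $G:B\to B$ by $G(x)=\varphi(x)F(x)$ for $x\in\overline{W}$ and $G(x)=0$ for $x\in B\setminus W$; these prescriptions agree on $\partial W$, so $G$ is well defined and continuous. Its range lies in the closed convex hull $K$ of $\{0\}\cup F(\overline{W})$, which is contained in $B$ (as $B$ is closed and convex) and is compact by Mazur's theorem, since $F(\overline{W})$ is relatively compact. Thus $G:B\to B$ is a continuous compact self-map of a closed convex set, and Schauder's fixed point theorem gives $x_0=G(x_0)$. If $x_0\in B\setminus W$, then $x_0=0\in W$, a contradiction; so $x_0\in\overline{W}$, whence $x_0=\varphi(x_0)F(x_0)\in A$, forcing $\varphi(x_0)=1$ and therefore $x_0=F(x_0)$, contradicting the failure of (a). This contradiction establishes that (a) or (b) holds.

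The main obstacle, I expect, is making the extended map $G$ genuinely compact with range inside $B$: one needs the closed convex hull of the relatively compact set $\{0\}\cup F(\overline{W})$ to remain both compact (Mazur's theorem, which uses completeness of $X$) and inside $B$ (convexity and closedness of $B$), so that Schauder's theorem can be applied on $B$ rather than on the possibly non-convex $\overline{W}$. The remaining bookkeeping --- the Urysohn separation and the case analysis over $\lambda=0$, $\lambda=1$, $\lambda\in(0,1)$ --- is routine once $A$ is set up correctly.
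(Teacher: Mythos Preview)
Your argument is correct and is essentially the standard derivation of the Leray--Schauder nonlinear alternative from Schauder's theorem via a Urysohn cutoff. The paper, however, does \emph{not} prove this theorem at all: it is merely quoted as a known tool (``we recall the following known nonlinear alternative'') and then applied in the proof of Theorem~\ref{Thm:exis}. So there is no paper proof to compare against; you have supplied a valid proof where the authors simply cite the result.

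Two minor remarks on your write-up, neither of which is a gap. First, the closedness of $A$ does not actually require the relative compactness of $F(\overline{W})$: compactness of $[0,1]$ lets you extract $\lambda_{n_k}\to\lambda$, and continuity of $F$ alone then gives $x=\lambda F(x)$. Second, in the final case analysis it is cleaner to split as ``$x_0\notin\overline{W}$'' versus ``$x_0\in\overline{W}$'' (a genuine dichotomy) rather than ``$x_0\in B\setminus W$'' versus ``$x_0\in\overline{W}$'' (which overlap on $\partial W$); your conclusion is unaffected because on $\partial W$ both prescriptions give $G(x_0)=0$, forcing $x_0=0\in W$, a contradiction.
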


\begin{theorem}
\label{Thm:exis}Assume that

\begin{enumerate}
\item[(H$_{4}$)] there exist non-decreasing functions $\varphi:\left[
0,\infty\right)  \times\left[  0,\infty\right)  \times\left[  0,\infty\right)
\rightarrow\left[  0,\infty\right)  ,\ \psi_{i}:\left[  0,\infty\right)
\rightarrow\left[  0,\infty\right)  $ and functions $l_{f}\in L^{\frac{1}%
{\tau}}\left(  \left[  0,T\right]  ,\mathbb{R}^{+}\right)  $, $l_{g_{i}}\in
L^{1}\left(  \left[  0,T\right]  ,\mathbb{R}^{+}\right)  $ with $\tau
\in\left(  1,\min(\alpha-\beta_{2}\right)  )$ such that
\begin{align*}
\left\vert f\left(  t,u,v,w\right)  \right\vert  &  \leq l_{f}\left(
t\right)  \varphi\left(  \left\vert u\right\vert +\left\vert v\right\vert
+\left\vert w\right\vert \right)  ,\\
\left\vert g\left(  t,u\right)  \right\vert  &  \leq l_{g_{i}}\left(
t\right)  \psi_{i}\left(  \left\vert u\right\vert \right)
\end{align*}
$i=0,1,2$ for all $t\in\left[  0,T\right]  $ and $u,v,w\in\mathbb{R}$;

\item[(H$_{5}$)] there exists a constant $K>0$ such that%
\[
\frac{K}{\varphi\left(  K\right)  \left\Vert l_{f}\right\Vert _{1/\tau}\left(
\Delta_{0}+\Delta_{1}+\Delta_{2}\right)  +%
{\displaystyle\sum\limits_{i=0}^{2}}
\left(  \rho_{i}+\widetilde{\rho}_{i}+\widehat{\rho}_{i}\right)  \left\vert
\lambda_{i}\right\vert \psi_{i}\left(  K\right)  \left\Vert l_{g_{i}%
}\right\Vert _{1}}>1.
\]
Then the problem (\ref{p1})-(\ref{p2}) has at least one solution on $\left[
0,T\right]  .$
\end{enumerate}
\end{theorem}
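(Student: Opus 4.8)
The plan is to apply the nonlinear alternative (Theorem~\ref{Thm:na}) to the same fixed-point operator $\mathfrak{F}$ defined in (\ref{ff1}), now viewed on the Banach space $X=C_\beta([0,T];\mathbb{R})$ with the norm $\|\cdot\|_\beta$. I take $B=X$ itself (a closed convex set) and let $W=B_K:=\{u\in X:\|u\|_\beta<K\}$, the open ball of radius $K$ given by hypothesis (H$_5$); clearly $0\in W$. Two technical facts must be established before the alternative can be invoked: first, that $\mathfrak{F}$ maps $\overline{W}$ into $B$ and is continuous, and second, that $\mathfrak{F}$ is compact on $\overline{W}$. Continuity follows from the joint continuity of $f,g_0,g_1,g_2$ together with the Lebesgue dominated convergence theorem applied inside each of the integrals in (\ref{ff1})--(\ref{ff3}); the integrable majorants come from the growth bounds in (H$_4$) evaluated on the bounded set $\overline{W}$.

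\smallskip

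Next I would prove compactness of $\mathfrak{F}(\overline{W})$ via the Arzel\`a--Ascoli theorem, checking uniform boundedness and equicontinuity of $\mathfrak{F}u$, $\mathfrak{D}_{0^+}^{\beta_1}\mathfrak{F}u$ and $\mathfrak{D}_{0^+}^{\beta_2}\mathfrak{F}u$ simultaneously (since the $\|\cdot\|_\beta$-norm bundles all three). Uniform boundedness is the same estimate that appears in (H$_5$): using (H$_4$), the H\"older inequality exactly as displayed after (\ref{f12}), and the bounds $|\omega_i|\le\rho_i$, $|\mathfrak{D}_{0^+}^{\beta_1}\omega_i|\le\widetilde\rho_i$, $|\mathfrak{D}_{0^+}^{\beta_2}\omega_2|\le\widehat\rho_2$, one gets for $\|u\|_\beta\le K$
\[
\|\mathfrak{F}u\|_\beta\le\varphi(K)\|l_f\|_{1/\tau}(\Delta_0+\Delta_1+\Delta_2)+\sum_{i=0}^2(\rho_i+\widetilde\rho_i+\widehat\rho_i)|\lambda_i|\psi_i(K)\|l_{g_i}\|_1,
\]
which by (H$_5$) is $<K$, so in particular $\mathfrak{F}(\overline W)\subseteq B$. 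Equicontinuity: for $t_1<t_2$ the difference $(\mathfrak{F}u)(t_2)-(\mathfrak{F}u)(t_1)$ splits into the power-kernel term $\int_0^{t}(t-s)^{\alpha-1}\Gamma(\alpha)^{-1}f\,ds$ and the terms $\bigl(\omega_i(t_2)-\omega_i(t_1)\bigr)(\cdots)$; the first is controlled by the standard estimate $|t_2^{\alpha-1}-t_1^{\alpha-1}|$-type bound (plus a $\int_{t_1}^{t_2}$ remainder) times $\varphi(K)\|l_f\|_{1/\tau}$ via H\"older, and the second by the (Lipschitz, hence uniform) continuity of the polynomials $\omega_i$ on $[0,T]$. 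The same argument applies to the $\beta_1$- and $\beta_2$-derivative expressions (\ref{ff2})--(\ref{ff3}), using there the explicit formulas for $\mathfrak{D}_{0^+}^{\beta_1}\omega_i(t)$ and $\mathfrak{D}_{0^+}^{\beta_2}\omega_2(t)$, which are again continuous in $t$; note $\alpha-\beta_1-1>0$ is not needed since even for a weakly singular kernel the H\"older/absolute-continuity estimate of the fractional integral gives equicontinuity.

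\smallskip

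Finally I apply Theorem~\ref{Thm:na}: alternative (b) would produce $u\in\partial W$, i.e.\ $\|u\|_\beta=K$, and $\lambda\in(0,1)$ with $u=\lambda\mathfrak{F}u$. Then $\|u\|_\beta\le\|\mathfrak{F}u\|_\beta$, and running the uniform-boundedness estimate above with $\|u\|_\beta=K$ yields
\[
K\le\varphi(K)\|l_f\|_{1/\tau}(\Delta_0+\Delta_1+\Delta_2)+\sum_{i=0}^2(\rho_i+\widetilde\rho_i+\widehat\rho_i)|\lambda_i|\psi_i(K)\|l_{g_i}\|_1,
\]
which says precisely that the quotient in (H$_5$) is $\le 1$, contradicting the hypothesis. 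Hence (b) cannot occur, so $\mathfrak{F}$ has a fixed point in $\overline W$, and that fixed point is a solution of (\ref{p1})--(\ref{p2}) on $[0,T]$. I expect the main obstacle to be the equicontinuity verification for the two fractional-derivative components, since the kernels $(t-s)^{\alpha-\beta_j-1}$ are weakly singular and one must be careful splitting the integral near $s=t$ and using the $L^{1/\tau}$ bound on $l_f$ uniformly in $u\in\overline W$; the boundedness and fixed-point-elimination steps are essentially a re-reading of the estimates already carried out for Theorem~\ref{Thm:uniq}.
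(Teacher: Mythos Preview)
Your proposal is correct and follows essentially the same approach as the paper's own proof: the same operator $\mathfrak{F}$, the same open ball $W=\{u:\|u\|_\beta<K\}$, the same boundedness estimate via H\"older and the $\rho_i,\widetilde\rho_i,\widehat\rho_i$ bounds, the same equicontinuity argument using continuity of the $\omega_i$'s (and their fractional derivatives), Arzel\`a--Ascoli for compactness, and the same elimination of alternative~(b) by contradicting~(H$_5$). You are in fact a bit more explicit than the paper on two points---continuity via dominated convergence, and the weakly singular kernel issue in the equicontinuity step for $\mathfrak{D}_{0^+}^{\beta_j}\mathfrak{F}u$---but the structure is identical.
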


\begin{proof}
Let $B_{r}:=\left\{  u\in C_{\beta}\left(  \left[  0,T\right]  ;\mathbb{R}%
\right)  :\left\Vert u\right\Vert _{\beta}\leq r\right\}  $.

Step 1: We show that the operator $\mathfrak{F:}C_{\beta}\left(  \left[
0,T\right]  ;\mathbb{R}\right)  \rightarrow C_{\beta}\left(  \left[
0,T\right]  ;\mathbb{R}\right)  $ defined by (\ref{ff1}) maps $B_{r}$ into
bounded set.

For each $u\in B_{r}$, we have%
\begin{align*}
\left\vert \left(  \mathfrak{F}u\right)  \left(  t\right)  \right\vert  &
\leq\dfrac{\varphi\left(  r\right)  }{\Gamma(\alpha)}%
{\displaystyle\int\limits_{0}^{t}}
(t-s)^{\alpha-1}\left\vert l_{f}(s)\right\vert ds\\
&  +\varphi\left(  r\right)
{\displaystyle\sum\limits_{i=0}^{2}}
\rho_{i}\left\vert b_{i}\right\vert \frac{1}{\Gamma(\alpha-\beta_{i})}%
{\displaystyle\int\limits_{0}^{T}}
(T-s)^{\alpha-\beta_{i}-1}\left\vert l_{f}(s)\right\vert ds\\
&  +\varphi\left(  r\right)
{\displaystyle\sum\limits_{i=1}^{2}}
\rho_{i}\left\vert a_{i}\right\vert \frac{1}{\Gamma(\alpha-\beta_{i})}%
{\displaystyle\int\limits_{0}^{\eta}}
(\eta-s)^{\alpha-\beta_{i}-1}\left\vert l_{f}(s)\right\vert ds\\
&  +%
{\displaystyle\sum\limits_{i=0}^{2}}
\rho_{i}\left\vert \lambda_{i}\right\vert \psi_{i}\left(  r\right)
{\displaystyle\int\limits_{0}^{T}}
\left\vert l_{g_{i}}(s)\right\vert ds.
\end{align*}
By the H\"{o}lder inequality, we have%
\begin{align*}
\left\vert \left(  \mathfrak{F}u\right)  \left(  t\right)  \right\vert  &
\leq\varphi\left(  r\right)  \left\Vert l_{f}\right\Vert _{1/\tau}\left(
\dfrac{T^{\alpha-\tau}}{\Gamma(\alpha)}\left(  \frac{1-\tau}{\alpha-\tau
}\right)  ^{1-\tau}+%
{\displaystyle\sum\limits_{i=0}^{2}}
\rho_{i}\left\vert b_{i}\right\vert \dfrac{T^{\alpha-\beta_{i}-\tau}}%
{\Gamma(\alpha-\beta_{i})}\left(  \frac{1-\tau}{\alpha-\beta_{i}-\tau}\right)
^{1-\tau}\right. \\
&  +\left.
{\displaystyle\sum\limits_{i=1}^{2}}
\rho_{i}\left\vert a_{i}\right\vert \dfrac{\eta^{\alpha-\beta_{i}-\tau}%
}{\Gamma(\alpha-\beta_{i})}\left(  \frac{1-\tau}{\alpha-\beta_{i}-\tau
}\right)  ^{1-\tau}\right)  +%
{\displaystyle\sum\limits_{i=0}^{2}}
\rho_{i}\left\vert \lambda_{i}\right\vert \psi_{i}\left(  r\right)  \left\Vert
l_{g_{i}}\right\Vert _{1}\\
&  =\varphi\left(  r\right)  \left\Vert l_{f}\right\Vert _{1/\tau}\Delta_{0}+%
{\displaystyle\sum\limits_{i=0}^{2}}
\rho_{i}\left\vert \lambda_{i}\right\vert \psi_{i}\left(  r\right)  \left\Vert
l_{g_{i}}\right\Vert _{1}.
\end{align*}
In a similar manner,%
\begin{align*}
\left\vert \mathfrak{D}_{0+}^{\beta_{1}}\left(  \mathfrak{F}u\right)  \left(
t\right)  \right\vert  &  \leq\varphi\left(  r\right)  \left\Vert
l_{f}\right\Vert _{1/\tau}\left(  \dfrac{T^{\alpha-\beta_{1}-\tau}}%
{\Gamma(\alpha-\beta_{1})}\left(  \frac{1-\tau}{\alpha-\beta_{1}-\tau}\right)
^{1-\tau}+%
{\displaystyle\sum\limits_{i=1}^{2}}
\widetilde{\rho}_{i}\left\vert b_{i}\right\vert \dfrac{T^{\alpha-\beta
_{i}-\tau}}{\Gamma(\alpha-\beta_{i})}\left(  \frac{1-\tau}{\alpha-\beta
_{i}-\tau}\right)  ^{1-\tau}\right. \\
&  +\left.
{\displaystyle\sum\limits_{i=1}^{2}}
\widetilde{\rho}_{i}\left\vert a_{i}\right\vert \dfrac{\eta^{\alpha-\beta
_{i}-\tau}}{\Gamma(\alpha-\beta_{i})}\left(  \frac{1-\tau}{\alpha-\beta
_{i}-\tau}\right)  ^{1-\tau}\right)  +%
{\displaystyle\sum\limits_{i=1}^{2}}
\widetilde{\rho}_{i}\left\vert \lambda_{i}\right\vert \psi_{i}\left(
r\right)  \left\Vert l_{g_{i}}\right\Vert _{1}\\
&  =\varphi\left(  r\right)  \left\Vert l_{f}\right\Vert _{1/\tau}\Delta_{1}+%
{\displaystyle\sum\limits_{i=1}^{2}}
\widetilde{\rho}_{i}\left\vert \lambda_{i}\right\vert \psi_{i}\left(
r\right)  \left\Vert l_{g_{i}}\right\Vert _{1},
\end{align*}
and%
\begin{align*}
\left\vert \mathfrak{D}_{0+}^{\beta_{2}}\left(  \mathfrak{F}u\right)  \left(
t\right)  \right\vert  &  \leq\varphi\left(  r\right)  \left\Vert
l_{f}\right\Vert _{1/\tau}\left(  \dfrac{T^{\alpha-\beta_{2}-\tau}}%
{\Gamma(\alpha-\beta_{2})}\left(  \frac{1-\tau}{\alpha-\beta_{2}-\tau}\right)
^{1-\tau}+\widehat{\rho}_{2}\frac{T^{\alpha-\beta_{2}-\tau}\left\vert
b_{2}\right\vert }{\Gamma(\alpha-\beta_{2})}\left(  \frac{1-\tau}{\alpha
-\beta_{2}-\tau}\right)  ^{1-\tau}\right. \\
&  +\left.  \widehat{\rho}_{2}\frac{\eta^{\alpha-\beta_{2}-\tau}\left\vert
b_{2}\right\vert }{\Gamma(\alpha-\beta_{2})}\left(  \frac{1-\tau}{\alpha
-\beta_{2}-\tau}\right)  ^{1-\tau}\right)  +\widehat{\rho}_{2}\left\vert
\lambda_{2}\right\vert \psi_{2}\left(  r\right)  \left\Vert l_{g_{2}%
}\right\Vert _{1}\\
&  =\varphi\left(  r\right)  \left\Vert l_{f}\right\Vert _{1/\tau}\Delta
_{2}+\widehat{\rho}_{2}\left\vert \lambda_{2}\right\vert \psi_{2}\left(
r\right)  \left\Vert l_{g_{2}}\right\Vert _{1}.
\end{align*}
Thus%
\[
\left\Vert \left(  \mathfrak{F}u\right)  \right\Vert _{\beta}\leq
\varphi\left(  r\right)  \left\Vert l_{f}\right\Vert _{1/\tau}\left(
\Delta_{0}+\Delta_{1}+\Delta_{2}\right)  +%
{\displaystyle\sum\limits_{i=0}^{2}}
\left(  \rho_{i}+\widetilde{\rho}_{i}+\widehat{\rho}_{i}\right)  \left\vert
\lambda_{i}\right\vert \psi_{i}\left(  r\right)  \left\Vert l_{g_{i}%
}\right\Vert _{1}.
\]

Step 2: The families $\left\{  \mathfrak{F}u:u\in B_{r}\right\}  $, $\left\{
\mathfrak{D}_{0+}^{\beta_{1}}\left(  \mathfrak{F}u\right)  :u\in
B_{r}\right\}  $, $\left\{  \mathfrak{D}_{0+}^{\beta_{2}}\left(
\mathfrak{F}u\right)  :u\in B_{r}\right\}  $ are equicontinuous.

Because of continuity of $\omega_{i}\left(  t\right)  $ and assumption
(H$_{4}$) we have
\begin{align*}
\left\vert \left(  \mathfrak{F}u\right)  \left(  t_{2}\right)  -\left(
\mathfrak{F}u\right)  \left(  t_{1}\right)  \right\vert  &  \leq\dfrac
{1}{\Gamma(\alpha)}\varphi\left(  r\right)  \int_{t_{1}}^{t_{2}}%
(t_{2}-s)^{\alpha-1}l_{f}\left(  s\right)  ds\\
&  +\dfrac{1}{\Gamma(\alpha)}\varphi\left(  r\right)  \int_{0}^{t_{1}}\left(
(t_{2}-s)^{\alpha-1}-(t_{1}-s)^{\alpha-1}\right)  l_{f}\left(  s\right)  ds\\
&  +\varphi\left(  r\right)
{\displaystyle\sum\limits_{i=0}^{2}}
\left\vert \omega_{i}\left(  t_{2}\right)  -\omega_{i}\left(  t_{1}\right)
\right\vert \left\vert b_{i}\right\vert
{\displaystyle\int\limits_{0}^{T}}
\frac{(T-s)^{\alpha-\beta_{i}-1}}{\Gamma(\alpha-\beta_{i})}l_{f}\left(
s\right)  ds\\
&  +\varphi\left(  r\right)
{\displaystyle\sum\limits_{i=1}^{2}}
\left\vert \omega_{i}\left(  t_{2}\right)  -\omega_{i}\left(  t_{1}\right)
\right\vert \left\vert a_{i}\right\vert
{\displaystyle\int\limits_{0}^{\eta}}
\frac{(\eta-s)^{\alpha-\beta_{i}-1}}{\Gamma(\alpha-\beta_{i})}l_{f}\left(
s\right)  ds\\
&  +%
{\displaystyle\sum\limits_{i=0}^{2}}
\left\vert \omega_{i}\left(  t_{2}\right)  -\omega_{i}\left(  t_{1}\right)
\right\vert \left\vert \lambda_{i}\right\vert \psi_{i}\left(  r\right)
\left\Vert l_{g_{i}}\right\Vert _{1}\\
&  \rightarrow0\ \ \ \ \ \text{as\ \ \ }t_{2}\rightarrow t_{1}.
\end{align*}
Therefore, $\left\{  \mathfrak{F}u:u\in B_{r}\right\}  $ is equicontinuous.
Similarly, we may prove that $\left\{  \mathfrak{D}_{0+}^{\beta_{1}}\left(
\mathfrak{F}u\right)  :u\in B_{r}\right\}  $ and $\left\{  \mathfrak{D}%
_{0+}^{\beta_{2}}\left(  \mathfrak{F}u\right)  :u\in B_{r}\right\}  $ are equicontinuous.

Hence, by the Arzela--Ascoli theorem, the sets $\left\{  \mathfrak{F}u:u\in
B_{r}\right\}  $, $\left\{  \mathfrak{D}_{0+}^{\beta_{1}}\left(
\mathfrak{F}u\right)  :u\in B_{r}\right\}  ,$ $\left\{  \mathfrak{D}%
_{0+}^{\beta_{2}}\left(  \mathfrak{F}u\right)  :u\in B_{r}\right\}  $ are
relatively compact in $C\left(  \left[  0,T\right]  ;\mathbb{R}\right)  .$
Therefore, $\mathfrak{F}\left(  B_{r}\right)  $ is a relatively compact subset
of $C_{\beta}\left(  \left[  0,T\right]  ;\mathbb{R}\right)  .$ Consequently,
the operator $\mathfrak{F}$ is compact.

Step 3: $\mathfrak{F}$ has a fixed in $\overline{W}=\left\{  u\in C_{\beta
}\left(  \left[  0,T\right]  ;\mathbb{R}\right)  :\left\Vert u\right\Vert
_{\beta}<K\right\}  .$

We let $u=\lambda\left(  \mathfrak{F}u\right)  $ for $0<\lambda<1$. Then for
each $t\in\left[  0,T\right]  $,
\begin{align*}
\left\Vert u\right\Vert _{\beta}  &  =\left\Vert \lambda\left(  \mathfrak{F}%
u\right)  \right\Vert _{\beta}\leq\varphi\left(  \left\Vert u\right\Vert
_{\beta}\right)  \left\Vert l_{f}\right\Vert _{1/\tau}\left(  \Delta
_{0}+\Delta_{1}+\Delta_{2}\right) \\
&  +%
{\displaystyle\sum\limits_{i=0}^{2}}
\left(  \rho_{i}+\widetilde{\rho}_{i}+\widehat{\rho}_{i}\right)  \left\vert
\lambda_{i}\right\vert \psi_{i}\left(  \left\Vert u\right\Vert _{\beta
}\right)  \left\Vert l_{g_{i}}\right\Vert _{1}.
\end{align*}
In other words,%
\[
\frac{\left\Vert u\right\Vert _{\beta}}{\varphi\left(  \left\Vert u\right\Vert
_{\beta}\right)  \left\Vert l_{f}\right\Vert _{1/\tau}\left(  \Delta
_{0}+\Delta_{1}+\Delta_{2}\right)  +%
{\displaystyle\sum\limits_{i=0}^{2}}
\left(  \rho_{i}+\widetilde{\rho}_{i}+\widehat{\rho}_{i}\right)  \left\vert
\lambda_{i}\right\vert \psi_{i}\left(  \left\Vert u\right\Vert _{\beta
}\right)  \left\Vert l_{g_{i}}\right\Vert _{1}}\leq1.
\]
According to the assumptions, we know that there exists $K>0$ such that
$K\neq\left\Vert u\right\Vert _{\beta}$. The operator $\mathfrak{F:}%
\overline{W}$ $\rightarrow C_{\beta}\left(  \left[  0,T\right]  ;\mathbb{R}%
\right)  $ is continuous and compact. From Theorem \ref{Thm:na}, we can deduce
that $\mathfrak{F}$ has a fixed point in $\overline{W}$.
\end{proof}

\begin{remark}
Notice that analogues of Theorem \ref{Thm:uniq} and \ref{Thm:exis} for the
case $f(t,u,v,w)=f(t,u)$ were considered in \cite{ahmadntoy2}. Thus our
results are generalization of \cite{ahmadntoy2}.
\end{remark}

\begin{remark}
Since the number $\left(  \alpha-\beta_{2}-1\right)  $ can be negative, the
function $(T-s)^{\alpha-\beta_{2}-1}\notin L^{\infty}\left(  \left[
0,T\right]  ,\mathbb{R}\right)  $. That is why in Theorem \ref{Thm:uniq} and
\ref{Thm:exis} it is assumed that $l_{f}\in L^{\frac{1}{\tau}}$, $\tau
\in(0,\min(1,\alpha-\beta_{2})).$
\end{remark}

\section{Examples}

\textbf{Example 1.} Consider the following boundary value problem of
fractional differential equation:%
\begin{equation}
\left\{
\begin{array}
[c]{c}%
\mathfrak{D}_{0^{+}}^{5/2}u\left(  t\right)  =l_{f}\left(  \dfrac{\left\vert
u\left(  t\right)  \right\vert }{1+\left\vert u\left(  t\right)  \right\vert
}+\dfrac{\left\vert \mathfrak{D}_{0^{+}}^{1/2}u\left(  t\right)  \right\vert
}{1+\left\vert \mathfrak{D}_{0^{+}}^{1/2}u\left(  t\right)  \right\vert }%
+\tan^{-1}\left(  \mathfrak{D}_{0^{+}}^{3/2}u\left(  t\right)  \right)
\right)  ,\ \ \ 0\leq t\leq1,\\
u\left(  0\right)  +u\left(  1\right)  =%
{\displaystyle\int_{0}^{1}}
\dfrac{u\left(  s\right)  }{\left(  1+s\right)  ^{2}}ds,\\
\mathfrak{D}_{0^{+}}^{1/2}u\left(  \frac{1}{10}\right)  +\mathfrak{D}_{0^{+}%
}^{1/2}u\left(  1\right)  =\dfrac{1}{2}%
{\displaystyle\int_{0}^{1}}
\left(  \dfrac{e^{s}u\left(  s\right)  }{1+2e^{s}}+\dfrac{1}{2}\right)  ds,\\
\mathfrak{D}_{0^{+}}^{3/2}u\left(  \frac{1}{10}\right)  +\mathfrak{D}_{0^{+}%
}^{3/2}u\left(  1\right)  =\dfrac{1}{3}%
{\displaystyle\int_{0}^{1}}
\left(  \dfrac{u\left(  s\right)  }{1+e^{s}}+\dfrac{3}{4}\right)  ds.
\end{array}
\right.  \label{ex1}%
\end{equation}
Here
\begin{align*}
\alpha &  =5/2,\beta_{1}=1/2,\beta_{2}=3/2,T=1,a_{0}=b_{0}=a_{1}=b_{1}%
=a_{2}=b_{2}=1,\ \\
\eta &  =\frac{1}{10}\ \ \lambda_{0}=1,\ \ \lambda_{1}=\frac{1}{2}%
,\ \ \lambda_{2}=\frac{1}{3},\ l_{g_{0}}=l_{g_{1}}=l_{g_{2}}=1,
\end{align*}
and
\begin{align*}
f\left(  t,u,v,w\right)   &  :=\dfrac{u}{1+u}+\dfrac{v}{1+v}+\tan^{-1}\left(
w\right)  ,\\
g_{0}\left(  t,u\right)   &  :=\dfrac{u}{\left(  1+t\right)  ^{2}}%
,\ \ \ g_{1}\left(  t,u\right)  :=\dfrac{e^{t}u}{1+2e^{t}}+\frac{1}{2},\\
g_{2}\left(  t,u\right)   &  :=\dfrac{u}{1+e^{t}}+\frac{3}{4}.
\end{align*}
Since $1.77<\Gamma(\frac{1}{2})<1.78;0.88<\Gamma(\frac{3}{2})<0.89;1.32<\Gamma
(\frac{5}{2})<1.33$ and $3.32<\Gamma(\frac{7}{2})<3.33$ with simple
calculations we show that%
\begin{align*}
\Delta_{0}  &  =2.34,\ \ \Delta_{1}=0.19,\ \ \Delta_{2}=0.15,\\
\rho_{0}  &  =0.5,\ \ \ \rho_{1}=1.01,\ \ \ \rho_{2}=1.2,\ \ \\
\tilde{\rho}_{0}  &  =0,\ \ \tilde{\rho}_{1}=0.76,\ \ \ \tilde{\rho}%
_{2}=0.9,\ \ \\
\hat{\rho}_{0}  &  =\hat{\rho}_{1}=0,\text{ \ }\hat{\rho}_{2}=0.51
\end{align*}
Furthermore,
\begin{align*}
&  \left(  \Delta_{0}+\Delta_{1}+\Delta_{2}\right)  \left\Vert l_{f}%
\right\Vert _{1/\tau}+%
{\displaystyle\sum\limits_{i=0}^{2}}
\rho_{i}\left\vert \lambda_{i}\right\vert \left\Vert l_{g_{i}}\right\Vert
_{1}+%
{\displaystyle\sum\limits_{i=1}^{2}}
\widetilde{\rho}_{i}\left\vert \lambda_{i}\right\vert \left\Vert l_{g_{i}%
}\right\Vert _{1}+\widehat{\rho}_{2}\left\vert \lambda_{2}\right\vert
\left\Vert l_{g_{2}}\right\Vert _{1}\\
&  <2.7l_{f}+0.75<1.
\end{align*}
Therefore, we can choose
\[
l_{f}<\frac{0.25}{2.7}.
\]
Thus, all the assumptions of Theorem \ref{Thm:uniq} are satisfied. Hence, the
problem (\ref{ex1}) has a unique solution on $[0,1]$.

\textbf{Example 2. \ }Consider the following boundary value problem of
fractional differential equation:%
\begin{equation}
\left\{
\begin{array}
[c]{c}%
\mathfrak{D}_{0^{+}}^{5/2}u(t)=\dfrac{\left\vert u\left(  t\right)
\right\vert ^{3}}{9(\left\vert u\left(  t\right)  \right\vert ^{3}+3)}%
+\dfrac{\left\vert \sin\mathfrak{D}_{0^{+}}^{1/2}u\left(  t\right)
\right\vert }{9(\left\vert \sin\mathfrak{D}_{0^{+}}^{1/2}u\left(  t\right)
\right\vert +1)}+\dfrac{1}{12},\ \ t\in\left[  0,1\right]  ,\\
u(0)+u(1)=%
{\displaystyle\int\limits_{0}^{1}}
\dfrac{u(s)}{3(1+s)^{2}}ds,\\
\mathfrak{D}_{0^{+}}^{1/2}u\left(  \frac{1}{10}\right)  +\mathfrak{D}_{0^{+}%
}^{1/2}u\left(  1\right)  =\dfrac{1}{2}%
{\displaystyle\int\limits_{0}^{1}}
\dfrac{e^{s}u(s)}{3(1+e^{s})^{2}}ds,\\
\mathfrak{D}_{0^{+}}^{3/2}u\left(  \frac{1}{10}\right)  +\mathfrak{D}_{0^{+}%
}^{3/2}u\left(  1\right)  =\dfrac{1}{3}%
{\displaystyle\int\limits_{0}^{1}}
\dfrac{u(s)}{3(1+e^{s})^{2}}ds,
\end{array}
\right.  \label{ex2}%
\end{equation}
where $f$ is given by%

\[
f(t,u,v,w)=\frac{\left\vert u\right\vert ^{3}}{10(\left\vert u\right\vert
^{3}+3)}+\frac{\left\vert \sin v\right\vert }{9(\left\vert \sin v\right\vert
+1)}+\frac{1}{12}.
\]
We have%
\[
\left\vert f(t,u,v,w)\right\vert \leq\frac{\left\vert u\right\vert ^{3}%
}{9(\left\vert u\right\vert ^{3}+3)}+\frac{\left\vert \sin v\right\vert
}{9(\left\vert \sin v\right\vert +1)}+\frac{1}{12}\leq\frac{11}{36},\text{
\ \ \ \ }u\in\mathbb{R}.
\]
Thus%
\[
\left\Vert f\right\Vert \leq\frac{11}{36}=l_{f}(t)\varphi\left(  K\right)
,\ \ \ \ \text{with \ \ }l_{f}(t)=\dfrac{1}{3},\varphi\left(  K\right)
=\dfrac{11}{12}.
\]
Moreover
\begin{align*}
\alpha &  =5/2,\beta_{1}=1/2,\beta_{2}=3/2,T=1,a_{0}=b_{0}=a_{1}=b_{1}%
=a_{2}=b_{2}=1,\ \\
\eta &  =\frac{1}{10}\ \ \lambda_{0}=1,\ \ \lambda_{1}=\frac{1}{2}%
,\ \ \lambda_{2}=\frac{1}{3},\ l_{g_{0}}=l_{g_{1}}=l_{g_{2}}=\frac{1}{3},
\end{align*}%
\begin{align*}
\Delta_{0}  &  =2.34,\ \ \Delta_{1}=0.19,\ \ \Delta_{2}=0.15,\\
\rho_{0}  &  =0.5,\ \ \ \rho_{1}=1.01,\ \ \ \rho_{2}=1.2,\ \ \\
\tilde{\rho}_{0}  &  =0,\ \ \tilde{\rho}_{1}=0.76,\ \ \ \tilde{\rho}%
_{2}=0.9,\ \ \\
\hat{\rho}_{0}  &  =\hat{\rho}_{1}=0,\text{ \ }\hat{\rho}_{2}=0.51,
\end{align*}
and
\[
g_{0}\left(  t,u\right)  :=\dfrac{u}{3(1+t)^{2}},\ \ \ g_{1}\left(
t,u\right)  :=\dfrac{e^{t}u}{3(1+e^{t})^{2}},\ \ g_{2}\left(  t,u\right)
:=\dfrac{u}{3(1+e^{t})^{2}},\ \ \psi_{i}\left(  K\right)  =K.
\]
From the condition
\[
\frac{K}{\varphi\left(  K\right)  \left\Vert l_{f}\right\Vert _{1/\tau}\left(
\Delta_{0}+\Delta_{1}+\Delta_{2}\right)  +%
{\displaystyle\sum\limits_{i=0}^{2}}
\left(  \rho_{i}+\widetilde{\rho}_{i}+\widehat{\rho}_{i}\right)  \left\vert
\lambda_{i}\right\vert \psi_{i}\left(  K\right)  \left\Vert l_{g_{i}%
}\right\Vert _{1}}>1
\]
we find that%
\[
K>9.8.
\]
Thus, all the conditions of Theorem \ref{Thm:exis} are satisfied. So, there
exists at least one solution of problem (\ref{ex2}) on $\left[  0,1\right]  $.

\bigskip

\bigskip

\end{document}